\newtheorem{theorem}{Theorem}[section]
\newtheorem{proposition}[theorem]{Proposition}
\newtheorem{lemma}[theorem]{Lemma}
\newtheorem{claim}[theorem]{Claim}
\newtheorem{corollary}[theorem]{Corollary}
\theoremstyle{definition}
\newtheorem{definition}[theorem]{Definition}
\newtheorem{question}[theorem]{Question}
\newcommand{\U}{\mathcal U}
\newcommand{\w}{\omega}
\newcommand{\B}{\mathcal{B}}
\newcommand{\A}{\mathcal{A}}
\newcommand{\CG}{\mathcal{G}}
\newcommand{\CH}{\mathcal{H}}
\newcommand{\F}{\mathcal{F}}
\newcommand{\uhr}{\upharpoonright}
\newcommand{\la}{\langle}
\newcommand{\ra}{\rangle}
\newcommand{\hot}{\mathfrak}
\newcommand{\nothing}[1]{}
\title[$S$-separable spaces]{A density counterpart of the Scheepers covering property}
\author[L. Aurichi, F. Maesano,  L. Zdomskyy]{Leandro Aurichi, Fortunato Maesano,  Lyubomyr Zdomskyy}
\address{Instituto de Ciências  Matemáticas e de Computação, Universidade de São Paulo,
Avenida Trabalhador são-carlense, 400, São Carlos, SP, 13566-590, Brazil}
\email{aurichi@icmc.usp.br}
\address{IIS ``Antonello'', Viale Giostra 2, 98121 Messina (ME), Italy}
\email{fortunato.maesano@unime.it}
\address{Institut f\"ur Diskrete Mathematik und Geometrie,
Technische Universit\"at Wien,
Wiedner Hauptstrasse 8—10/104, 1040 Vienna, Austria}
\email{lzdomsky@gmail.com}
\urladdr{http://www.dmg.tuwien.ac.at/zdomskyy/}
\subjclass[2020]{Primary: 03E35, 54A35. Secondary: 03E17, 54C35.}
\keywords{$M$-separable, $S$-separable, coherence of filters, ultrafilter.}
\thanks{The first named author would like to thank the support of Fundação de
Amparo à Pesquisa do Estado de São Paulo (FAPESP) [2023/00595-6]. 
The second author would like to thank the ``National Group for the Algebraic and Geometric Structures and their Applications (GNSAGA--INdAM)'' for generous support for this research. 
The research of the third author was funded by the Austrian Science Fund (FWF) [10.55776/I5930 and 10.55776/PAT5730424].}
\begin{document}

\begin{abstract}
We introduce  a density counterpart of the Scheepers covering property
$\bigcup_{\mathrm{fin}}(\mathcal O,\Omega)$ and study its relations to
 known combinatorial density {properties}. In particular, we show that it is equivalent to the $M$-separability under the Near Coherence of Filters principle of Blass and Weiss.
\end{abstract}

\maketitle

\section{Introduction}
This paper is devoted to combinatorial density properties introduced in 
\cite{Sch99} as counterparts to combinatorial covering properties (also called selection principles), see \cite{coc1,coc2} and references therein. 
 A topological
space $X$ is said \cite{Sch99} to be \emph{$M$-separable}, if for every sequence 
$\la D_n:n\in\w\ra$
of dense subsets of $X$, one can pick finite subsets $F_n\subset D_n$,
$n\in\w$, so that $\bigcup_{n\in\w}F_n$ is dense in $X$.
If we additionally require that 
 every
nonempty open set $U\subset X$ meets all but finitely many $F_n$’s,
then we get the definition of \emph{$H$-separable} spaces introduced in \cite{BelBonMat09}. 
 It is obvious that second-countable spaces  are $H$-separable, and each $H$-separable space is
$M$-separable. 

As indicated above, the $H$- and $M$-separability appeared as counterparts of the classical covering properties of Hurewicz and Menger introduced in \cite{Hur27} and \cite{Men24}, respectively. In \cite{coc1} Scheepers invented
uniform notation for several known combinatorial covering properties 
and created a diagram including these, now named after him,  where in a natural way several new properties appeared in a ``syntactical'' way, i.e., through an application of all known selection procedures to all known kinds of covers, this way making Scheepers diagram more ``complete''.
One of these properties is $\bigcup_{\mathrm{fin}}(\mathcal O,\Omega)$, which is now often called the \emph{Scheepers} property. It appeared to be useful in the study of uniform covering properties of free topological groups \cite{Zdo06} and in several other situations \cite{SzeTsaZdo21}. 

In this paper we suggest a density counterpart of the Scheepers property
and study it by following the patterns from the covering properties. We define a  topological space $X$  to be \emph{$S$-separable}, if for every 
sequence $\la D_n:n\in\w\ra$ of dense subsets of $X$ there exists a sequence
$\la F_n:n\in\w\ra$  such that $F_n\in [D_n]^{<\w}$ for all $n\in\w$,
and for every finite family $\{U_i:i\in k\}$ of open non-empty subsets of
$X$ there exists $n\in\w$ with $U_i\cap F_n\neq\emptyset$ for all $i\in k$.

 First we examine the relation between the $S$-separable and $M$-separable spaces. Since any counterexample for a potential implication can be replaced
 with some of its countable dense subsets, 
 we often {lose} no generality  by reducing our  consideration to countable spaces. 

 If in the definitions of $H$-, $S$- and  $M$-separable spaces we consider only decreasing sequences $\la D_n:n\in\w\ra$ of dense subsets, then we get the definitions of  $mH$-, $mS$-  and $mM$-separable spaces, respectively. Clearly, the ``\emph{m}''-versions are formally weaker, but every $mM$-separable space is actually
 $M$-separable, see \cite{GruSak11}. The situation in the other two cases is more subtle and will be addressed later.

Theorem \ref{main_ncf} below, our first main result,
 is the density counterpart of 
\cite[Corollary~2]{Zdo05}.
As an additional set-theoretic assumption
it uses the following  NCF\footnote{Abbreviated from the \emph{near coherence of filters}.}
principle  introduced by Blass and Weiss in \cite{BlaWei78} and first proved to be consistent in 
\cite{BlaShe87}: 
\begin{quote} For any two non-principal filters $\U_0,\U_1$ on $\w$ there exists a monotone surjection $\phi:\w\to\w$ such that $\phi[\U_0]\cup\phi[\U_1]$ is centered, i.e., $\cap\mathcal C$ is infinite for any finite $\mathcal C\subset \phi[\U_0]\cup\phi[\U_1]$.
\end{quote}
It is known \cite[Theorem~14]{Bla86} that  NCF implies $\hot u<\hot d$ and thus contradicts CH. 

Note that on the covering side we needed a stronger set theoretic assumption:
\cite[Corollary~2]{Zdo05} states that the Scheepers and Menger properties are equivalent
under $\hot u<\hot g$, which is known \cite{BlaLaf89} to imply NCF, while there are models of NCF where $\hot u\geq \hot g$, see \cite{MilShe11}.

\begin{theorem} \label{main_ncf}
(NCF) \ The following conditions are equivalent for 
a countable space $X$:
\begin{enumerate}
\item $X$ is $S$-separable;
\item $X$ is $mS$-separable;
\item $X$ is $M$-separable;
\end{enumerate}
\end{theorem}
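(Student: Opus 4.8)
The plan is to run the cycle (1)$\Rightarrow$(2)$\Rightarrow$(3)$\Rightarrow$(1), with all the content sitting in the last implication. The first two are soft: (1)$\Rightarrow$(2) because a decreasing sequence of dense sets is in particular a sequence of dense sets; and (2)$\Rightarrow$(3) because, applying an $mS$-separability selection to singleton families $\{U\}$, one sees that $\bigcup_n F_n$ is dense, so every $mS$-separable space is $mM$-separable and hence $M$-separable by \cite{GruSak11}. Thus everything reduces to showing, under NCF, that an $M$-separable countable space is $S$-separable.

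The key preparatory step is a reformulation that exposes the filters on which NCF will act. Fix a countable space $X$, a sequence $\langle D_n\rangle$ of dense sets and a candidate selection $F_n\in[D_n]^{<\omega}$. For a nonempty open $U$ put $A_U=\{n\in\omega:F_n\cap U\neq\emptyset\}$, and for $x\in X$ let $\mathcal{A}_x$ be the filter on $\omega$ generated by $\{A_U:x\in U\text{ open}\}$ (a filter base, since $A_U\cap A_V\supseteq A_{U\cap V}$). Because every nonempty open set contains a point of $X$, one checks that $\langle F_n\rangle$ witnesses $S$-separability for $\langle D_n\rangle$ if and only if $\bigcup_{x\in X}\mathcal{A}_x$ is centered; as $X$ is countable, this is a statement about countably many filters. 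The second, crucial, observation is that if $\phi\colon\omega\to\omega$ is a monotone surjection and one passes from $\langle F_n\rangle$ to the regrouped selection $\bigl\langle\bigcup_{n\in\phi^{-1}(m)}F_n\bigr\rangle_m$, then each $\mathcal{A}_x$ is replaced by $\phi[\mathcal{A}_x]$, and this regrouped selection again lies in $\prod_m[D_m]^{<\omega}$ as soon as $\langle D_n\rangle$ is decreasing.

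Now the heart of the proof, carried out first for a decreasing dense sequence $\langle D_n\rangle$. Since $X$ is $M$-separable, hence $mM$-separable, choose an $M$-witness $\langle F_n\rangle$ for $\langle D_n\rangle$ which, by a routine strengthening (using that off its isolated points $X$ is crowded, so that removing a finite set from a dense set leaves it dense), has all tail-unions dense, places each non-isolated point in only finitely many $F_n$, and places the first $n$ isolated points in $F_n$; then every $\mathcal{A}_x$ is a proper non-principal filter all of whose members are infinite, and the isolated points contribute only cofinite sets to the $A_U$'s. Enumerate the non-isolated points as $x_0,x_1,\dots$ and build monotone surjections $\psi_k$ with $\psi_0=\mathrm{id}$ and $\psi_{k+1}=\phi_{k+1}\circ\psi_k$, where $\phi_{k+1}$ is produced by NCF applied to the two non-principal filters generated by $\psi_k[\mathcal{A}_{x_0}]\cup\cdots\cup\psi_k[\mathcal{A}_{x_k}]$ and by $\psi_k[\mathcal{A}_{x_{k+1}}]$ — and arranged, by splicing the map from NCF with the identity, to be the identity on an initial segment of length $>k$. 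Then the $\psi_k$ stabilize pointwise to a monotone surjection $\phi$, and an induction on $k$, using that the NCF conclusion delivers \emph{infinite} finite intersections, shows that $\bigcup_k\phi[\mathcal{A}_{x_k}]$ is centered; adding back the cofinite sets coming from the isolated points preserves centeredness. Hence the regrouped selection is an $mS$-witness. Finally one extends this to arbitrary (not necessarily decreasing) sequences of dense sets, i.e.\ to genuine $S$-separability, along the lines of the $mM\Rightarrow M$ reduction of \cite{GruSak11}, with the additional care this requires on the $S$-side.

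The main obstacle is exactly the iteration in the previous paragraph. NCF coheres only \emph{two} filters, so it must be iterated over the countably many neighbourhood filters $\mathcal{A}_x$, and two points need real care: (a) forcing the composed monotone surjections to converge to an honest surjection $\omega\to\omega$, which is precisely what makes the ``identity on an initial segment'' refinement of NCF indispensable; and (b) maintaining, along the iteration, both the centeredness of the combined family (here one uses that NCF yields infinite, not merely nonempty, finite intersections) and the non-principality needed for the next application of NCF. Arranging the initial $M$-witness so that every $\mathcal{A}_x$ is a well-behaved non-principal filter — in particular at non-first-countable points — and carrying out the passage from decreasing to arbitrary sequences are further, more routine, technical matters. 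The conceptual core, which is also what singles out NCF as exactly the right hypothesis (in contrast with the stronger $\mathfrak{u}<\mathfrak{g}$ needed on the covering side of \cite[Corollary~2]{Zdo05}), is the recognition that $S$-separability of a countable space amounts to saying that the countably many neighbourhood filters attached to an $M$-separability selection can be made jointly compatible by a single finite-to-one map.
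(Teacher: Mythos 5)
Your reduction to the implication $(3)\Rightarrow(1)$ and the two soft implications are exactly as in the paper. Your treatment of the \emph{decreasing} case is, as far as I can check, correct, and it is a genuinely different route from the paper's: you iterate pairwise NCF over the countably many neighbourhood trace filters $\mathcal A_x$, composing monotone surjections that are the identity on longer and longer initial segments so that they stabilize, whereas the paper never iterates NCF at all. Instead it fixes a single ultrafilter $\CG$ with a base of size $<\mathfrak d$ (available since NCF implies $\mathfrak u<\mathfrak d$), uses NCF once per pair $(G,x)$ to cohere the trace family $\CH_{G,x}$ with $\CG\uhr G$, and then glues everything with a dominating-family argument (Lemma~\ref{less_d}) to produce one bound $h$ with $F_n=D_n\cap h(n)$. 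Your iteration buys a more self-contained, ``pairwise'' use of NCF for the $mS$ conclusion; the paper's version buys the stronger $S_{\CG}$-separability and, crucially, uniformity over arbitrary (non-monotone) sequences.

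That last point is where your proposal has a genuine gap. Your regrouping $F'_m=\bigcup_{n\in\phi^{-1}(m)}F_n$ lands in $[D_m]^{<\w}$ only because $\phi^{-1}(m)\sbst[m,\infty)$ and the $D_n$ are decreasing; for an arbitrary sequence of dense sets the regrouped selection is simply not a selection from $\la D_m:m\in\w\ra$, and the whole mechanism of converting ``different witnesses $n_i$ inside one block $\phi^{-1}(m)$'' into ``one index'' collapses. So what you actually prove is $(3)\Rightarrow(2)$, i.e.\ the equivalence of $M$- and $mS$-separability under NCF, not $(3)\Rightarrow(1)$. Your closing remark that one passes from decreasing to arbitrary sequences ``along the lines of the $mM\Rightarrow M$ reduction'' cannot be right as stated: Theorem~\ref{main_mS_vs_S} of this very paper shows that under CH there is an $mS$-separable, non-$S$-separable countable space, so no ZFC combinatorial reduction from the decreasing case to the general case exists --- the passage from $mS$ to $S$ is exactly where the remaining strength of the hypothesis must be spent, not a routine technicality. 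The paper sidesteps the issue by never regrouping: identifying $X$ with $\w$, it encodes the finite selection from $D_n$ as $D_n\cap h(n)$, which makes sense for arbitrary sequences, and obtains the single $h$ from the $<\mathfrak d$-sized base of $\CG$ via Lemma~\ref{less_d}. To complete your argument you would need an analogous device (e.g.\ running your iteration against such an ultrafilter and then dominating the resulting bounds), at which point you are essentially back to the paper's proof of Proposition~\ref{ncf_s_sep}.
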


Theorem~\ref{main_ncf} suggests the following 

\begin{question}
Are the Menger and Scheepers covering properties equivalent under NCF?
\end{question}

Thus, the $mS$-separability is consistently equivalent to 
the $S$-separability. However, this equivalence cannot be established in ZFC, as the following theorem shows. 

\begin{theorem} \label{main_mS_vs_S}
(CH) \ There exists a $mS$-separable but not $S$-separable countable regular space $X$ without isolated points. 
\end{theorem}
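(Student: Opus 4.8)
The plan is to build $X$ by a transfinite recursion of length $\omega_1$, using CH both to enumerate all the objects that must be treated and to supply auxiliary combinatorial scaffolding (an enumeration of all tasks in order type $\omega_1$, together with towers/scales or a generic ultrafilter where convenient) that keeps the recursion coherent. Fix a countable dense-in-itself metrizable zero-dimensional space $(Y,\tau_0)$, concretely $Y=\mathbb Q$, and a partition $Y=\bigsqcup_{n\in\w}D_n$ into $\tau_0$-dense pieces. The underlying set of $X$ is $Y$, and its topology $\tau\supseteq\tau_0$ is generated by $\tau_0$ together with the complements of the members of a $\sigma$-ideal $\I$ of ``small'' subsets of $Y$ which is grown along the recursion (plus whatever extra open sets are needed to keep $\tau$ zero-dimensional --- hence regular --- at every stage). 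One always keeps $\I$ so that no member of it contains a nonempty $\tau_0$-open set and $Y\notin\I$, which forces $X$ to stay crowded and no $D_n$ to lose its density; the sequence $\langle D_n:n\in\w\rangle$ will witness that $X$ is not $S$-separable. An initial remark frames the whole construction: $\I$ cannot consist of $\tau_0$-nowhere-dense sets only, for otherwise $\tau_0$ would remain a $\pi$-base of $\tau$, ``$\tau$-dense'' would coincide with ``$\tau_0$-dense'', and $X$ would inherit the $S$-separability of the second-countable space $(Y,\tau_0)$. Hence $\I$ must contain $\tau_0$-dense codense sets, the class of dense sets genuinely shrinks as the recursion proceeds, and coping with this moving notion of density is the bulk of the bookkeeping.

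\textbf{Forcing the failure of $S$-separability.} Under CH there are only $\aleph_1$ sequences $\bar F=\langle F_n:n\in\w\rangle$ with $F_n\in[D_n]^{<\w}$; enumerate them and, at the appropriate stages, make each fail to witness $S$-separability via $\langle D_n\rangle$. To kill a given $\bar F$ it suffices to arrange, in the final space, a finite family $U_0,\dots,U_{k-1}$ of nonempty open sets that no single $F_n$ meets entirely. If $\bigcup_nF_n$ is not $\tau$-dense --- which is automatic once a suitable codense set has entered $\I$ --- one open set does the job. Otherwise one exploits the tension between ``each $F_n$ is finite'' and ``$\bigcup_nF_n$ is $\tau_0$-dense'': one chooses a finite clopen partition $Y=B_0\cup\dots\cup B_{k-1}$ (the size $k$ may have to grow with $\bar F$), an assignment $n\mapsto i(n)<k$, and a $\tau_0$-codense set $N$ that one adds to $\I$ with $\bigcup_n\bigl(F_n\cap B_{i(n)}\bigr)\subseteq N$; then the nonempty $\tau$-open sets $U_i:=B_i\setminus N$ satisfy $F_n\cap U_{i(n)}=\emptyset$ for all $n$. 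The combinatorial core here is to make this choice of partition, assignment and $N$ so that adding $N$ keeps $\I$ legitimate (no member covering a basic open set, $Y\notin\I$, no $D_n$ killed); intuitively such an $N$ always exists because $\bigcup_nF_n$, although dense, is a single countable set which --- processed column-by-column against the countable base and the partition --- can be steered so that exactly one ``share'' per index $n$ falls into a codense hole.

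\textbf{Preserving $mS$-separability.} Decreasing sequences of dense sets are far more rigid, and this is what we lean on. Enumerate (again $\aleph_1$ many) all decreasing sequences $\langle D'_n:n\in\w\rangle$ of subsets of $Y$; at the stages devoted to them, whenever $\langle D'_n\rangle$ currently consists of dense sets, do the following. Since only countably many members have so far entered $\I$, there are only countably many finite families of basic open sets in the current topology, so --- reserving a fresh index for each such family and placing into $F'_n$ a point of $D'_n$ in each member of the family assigned to $n$ (possible as every $D'_n$ meets every current nonempty open set) --- one fixes once and for all a selection $\langle F'_n:n\in\w\rangle$ with $F'_n\in[D'_n]^{<\w}$ that is $S$-good for the current topology. (When $\bigcap_nD'_n$ is dense this is even cheaper: initial segments of a fixed enumeration of $\bigcap_nD'_n$ work once and for all.) Every later stage must then \emph{protect} these reserved selections: each new codense set added to $\I$ is chosen so as not to ``swallow'' any of the relevant finite traces $F'_n\cap B$ inside a basic $B$, so that the reserved selections stay $S$-good against the newly created open sets. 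At the end, any finite family of open sets of $X$ lives in the topology at some countable stage $\eta$, and a decreasing dense sequence is either handled before $\eta$ (its reserved selection was protected up to $\eta$) or at a stage past $\eta$ (the family was already present and was dealt with by the fresh-index assignment); either way the reserved selection remains $S$-good, so $X$ is $mS$-separable.

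The main obstacle --- and the reason the statement is a theorem of CH, not of ZFC --- is the reconciliation of the two halves: every codense set thrown into $\I$ in order to defeat a general selection for $\langle D_n\rangle$ simultaneously erodes the stock of good selections for every decreasing dense sequence handled so far, and must therefore be chosen subject to infinitely many protection constraints while still doing its defeating job. Showing that these constraints never force a genuine conflict --- that the freedom in choosing the clopen partition, the assignment $n\mapsto i(n)$ and the codense hole $N$ at each stage always leaves room to satisfy all the protections, all the legitimacy requirements on $\I$, and the preservation of each $D_n$'s density --- is the heart of the proof, and it is exactly here that the coherence supplied by CH enters; by Theorem~\ref{main_ncf}, under the opposing hypothesis NCF no space $X$ with these two properties exists, so no such balancing can be performed in ZFC.
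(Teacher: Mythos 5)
Your overall architecture is the same as the paper's: a length-$\w_1$ refinement of the topology of $\mathbb Q$ under CH, a fixed partition of the space into dense pieces serving as the witness for non-$S$-separability (so that each candidate selection, being supported piece-by-piece, is defeated by a finite family of new open sets determined by a colouring), and reserved selections for each decreasing sequence of dense sets that must survive all later refinements. But there is a genuine gap at exactly the point you yourself flag as ``the heart of the proof'': you supply no mechanism for the reconciliation, and the notion of ``protection'' you propose is too weak to close the induction. Arranging that the new codense hole $N$ does not swallow the finite traces $F'_n\cap B$ only guarantees that $F'_n$ still meets the shrunken \emph{old} sets $B\setminus N$; what is needed is that for every finite family of basic open sets of the \emph{final} topology there remain infinitely many $n$ with $F'_n$ meeting all of them, and the invariant carried at stage $\delta$ must be robust enough to survive the $\w_1$-many refinements still to come while being preserved, for all $\delta$-many previously reserved selections simultaneously, by a single new pair of open sets. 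The paper achieves this by quantifying goodness --- condition $(3)$ demands $|L^\beta_n\cap U|\to\infty$ along a set $G_{\beta,U}$ lying in a fixed \emph{selective} ultrafilter $\mathcal G$ (available under CH), together with a second ``column-counting'' version $(3)(c)$ --- and by a game-theoretic argument (Claim~\ref{cl01}, via Laflamme's theorem that player I has no winning strategy in the selection game on a selective ultrafilter) producing an ultrafilter set $M_\delta$ along which each half of the new clopen pair still meets each old $L^{\beta_{j'}}_{m_j}\cap U_{j'}$ in at least $j$ points. None of this is replaceable by soft bookkeeping; the authors explicitly point to the Laver model (no selective ultrafilters, yet $\hot b=\hot d=\hot r=\hot c$) as a situation they cannot handle, which indicates that the selective ultrafilter is doing real work beyond mere enumeration.

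A secondary but real defect of your framework: declaring the complements of codense sets open does not keep the space regular. A nonempty codense set cannot be open, so $Y\setminus N$ cannot be made clopen, and the closure of $B\setminus N$ in the refined topology can still meet $N$; your parenthetical ``plus whatever extra open sets are needed'' would then destroy the density bookkeeping you rely on. The paper avoids this entirely by adjoining at each stage a \emph{complementary pair} $U^\delta_0\sqcup U^\delta_1=\w$ of sets dense in the previous topology as new clopen sets, which keeps every stage zero-dimensional (hence regular) for free and is also what makes a two-element family of open sets suffice to defeat each candidate selection.
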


Let us note that the space $X$ provided by Theorem~\ref{main_mS_vs_S}
is $M$-separable because it is $mM$-separable being $mS$-separable, and
$mM$-separable spaces are $M$-separable. Thus, CH implies the existence of
an $M$-separable space which is not $S$-separable. On the covering side 
it is known \cite[Theorem~2.1]{SzeTsaZdo21} that such counterexamples can be obtained 
under much weaker assumptions, namely $\hot r\geq \hot d$ suffices 
for the existence of a Menger non-Scheepers set of reals.
This motivated the following 

\begin{question} \label{pr01}
Does $\hot r\geq\hot d$ imply the existence of
a countable $M$-separable space which is not $S$-separable? What about 
$\hot b=\hot d$? What happens in the Laver model?

{Do} any of these assumptions imply the formally stronger statement that there exists an $mS$-separable space
which is not $S$-separable?
\end{question}

\begin{question}
Is it consistent that every countable $mS$-separable space is $S$-separable, but 
there exists a countable $M$-separable space which is not $S$-separable?
\end{question}

We have  mentioned the Laver model in Problem~\ref{pr01}
because it is a classical model of $\hot b=\hot d=\hot r=\hot c$
 without selective  ultrafilters (even $Q$-points), see \cite{Mil80},
while the existence of a selective ultrafilter was crucial for our proof of 
Theorem~\ref{main_mS_vs_S}.

An important class of topological spaces for which the equivalences from 
Theorem~\ref{main_ncf} hold in ZFC is that of so-called $C_p$-spaces:
For a Tychonoff space $T$ we denote by $C_p(T)$ the space 
$\{f:T\to\mathbb R:f$ is continuous $\}$ with the topology inherited from 
the Tychonoff power $\mathbb R^T$.

\begin{theorem} \label{main_cp}
The following conditions are equivalent for 
a Tychonoff space $T$:
\begin{enumerate}
\item $C_p(T)$ is $S$-separable;
\item $C_p(T)$ is $mS$-separable;
\item $C_p(T)$ is $M$-separable;
\end{enumerate}
\end{theorem}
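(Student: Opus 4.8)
The plan is to prove Theorem~\ref{main_cp} by combining the known combinatorial characterizations of the density properties of $C_p(T)$ with the self-improving structure that $C_p(T)$ enjoys. The crucial point is that for $C_p$-spaces one can always reduce an arbitrary sequence of dense sets to a very homogeneous one: since $C_p(T)$ is a topological group (indeed a topological vector space), a dense set $D$ can be translated, and a countable dense set can be replaced by the subgroup it generates together with $\mathbb Q$-scalar combinations; moreover $C_p(T)$ is homeomorphic to $C_p(T)\times C_p(T)$ in a way that is compatible with density. The implications $(1)\Rightarrow(2)\Rightarrow(3)$ are trivial (every decreasing sequence is a sequence, and the $S$-conclusion with $k=1$ is exactly the $M$-conclusion), so the whole content is $(3)\Rightarrow(1)$.

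First I would recall, and cite from the literature on $M$-separability of function spaces (Scheepers' work and its successors, e.g.\ Bella--Bonanzinga--Matveev and Gruenhage--Sakai), the translation of ``$C_p(T)$ is $M$-separable'' into a covering-type statement about $T$: it is equivalent to $C_p(T)$ having countable fan tightness together with the Menger-type selection property $\mathsf{S}_{\mathit{fin}}(\Omega,\Omega)$ on $T$ (all finite powers of $T$ satisfy the Menger property in the $\omega$-cover formulation), or more precisely to the statement that $T$ satisfies $\mathsf{S}_{\mathit{fin}}(\Omega_x,\Omega_x)$-style conditions at each point suitably uniformized. The key structural observation is then that the $S$-separability of $C_p(T)$ translates, via the same duality, into the Scheepers property $\bigcup_{\mathrm{fin}}(\mathcal O,\Omega)$ for the relevant family of covers of finite powers of $T$ — and here is where the group structure pays off: in the $\omega$-cover setting, $\mathsf{S}_{\mathit{fin}}(\Omega,\Omega)$ for all finite powers simultaneously is already equivalent to $\bigcup_{\mathrm{fin}}(\mathcal O,\Omega)$, because one can take disjoint unions/products of finitely many covers and absorb the ``finitely many open sets $U_i$'' of the $S$-separability conclusion into a single $\omega$-cover of a finite power.

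Concretely, the main step is: given dense $D_n\subseteq C_p(T)$ and the $M$-separable choices $F_n\in[D_n]^{<\w}$ that only handle one basic open set at a time, I would show how to upgrade them to handle any finite family $\{U_i:i<k\}$. Using that $C_p(T)^k\cong C_p(T\sqcup\cdots\sqcup T)$ (the $k$-fold topological sum) and that the diagonal-type dense subsets of the power can be built from the $D_n$'s, a single basic open set in $C_p(T^{(k)})$ corresponds to a finite family of basic open sets in $C_p(T)$; feeding the $M$-separability of $C_p$ over all finite sums of $T$ back through the duality yields exactly the $S$-separability conclusion. One then needs the ``$m$''-versions to come along for free: $mM$-separability implies $M$-separability by \cite{GruSak11}, and in the $C_p$ setting the reduction to a decreasing sequence is harmless because one may replace $D_n$ by $\bigcap_{m\le n}(\text{a dense subgroup containing }D_m)$ without losing density, so $mS$ and $S$ coincide for $C_p(T)$ and likewise collapse onto $M$.

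The hard part will be making the duality bookkeeping precise in the $S$-direction: the usual $C_p$--cover dictionary is stated for sequences of covers of a single space (Menger) or for the Scheepers property with $\omega$-covers, and I must verify carefully that the ``for every finite family $\{U_i:i<k\}$ there is a single $n$'' quantifier on the $C_p$ side matches the $\Omega$-target (rather than the $\Gamma$-target) on the cover side, uniformly over $k$, and that passing to finite topological sums of $T$ does not disturb the Scheepers property — equivalently, that $\bigcup_{\mathrm{fin}}(\mathcal O,\Omega)$ is preserved by finite powers/sums, which is the classical fact that makes the Scheepers property well-behaved. Once that equivalence chain ``$(3)$ for $C_p(T)$ $\iff$ Menger-type property of all finite powers of $T$ $\iff$ Scheepers property of $T$ $\iff$ $(1)$ for $C_p(T)$'' is assembled, the theorem follows, with the $mS$/$S$ coincidence handled by the subgroup reduction described above; I would present the argument as a sequence of lemmas isolating (a) the $C_p$-to-cover translation of $S$-separability, (b) the Scheepers-equals-finite-powers-Menger fact, and (c) the decreasing-sequence reduction.
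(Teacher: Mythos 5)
Your overall skeleton --- reduce $(3)\Rightarrow(1)$ to the $M$-separability of all finite powers of $C_p(T)$ and then diagonalize --- is essentially the paper's: the paper quotes \cite[Corollary 2.12]{BelBonMatTka08} ($M$-separability of $C_p(T)$ implies $M$-separability of $C_p(T)^n$ for all $n$) and then proves, for \emph{arbitrary} spaces, that $S$-separability is equivalent to a weak form of $M$-separability of finite powers (``$pM$-separability''). The nontrivial direction of that equivalence is exactly the diagonalization your sketch elides: run the $M$-separability of the $n$-th power against $\langle D_k^n:k\in\omega\rangle$ to get finite sets $\langle F_{n,k}:k\in\omega\rangle$ and then set $F_k=\bigcup_{n\le k}F_{n,k}$, so that a single sequence works for all arities $k$ simultaneously. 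That step is purely combinatorial and needs none of the $C_p$-duality bookkeeping you describe; it should be isolated as a lemma, since it is the only genuinely new content of the implication.

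The chain of equivalences you propose, however, contains a real error. You assert that $\mathsf{S}_{\mathit{fin}}(\Omega,\Omega)$ (equivalently, the Menger property of all finite powers) coincides with the Scheepers property $\bigcup_{\mathrm{fin}}(\mathcal O,\Omega)$, and that the latter ``is preserved by finite powers, which is the classical fact that makes the Scheepers property well-behaved.'' Both claims are consistently false: already under CH there are Hurewicz (hence Scheepers) sets of reals whose square is not Menger \cite{coc2}, and the failure of preservation of the Menger and Scheepers properties under finite powers is precisely the subject of \cite{SzeTsaZdo21}, which this paper cites. Hence the middle link ``Menger in all finite powers of $T$ $\iff$ Scheepers property of $T$'' breaks, and the proposed duality ``$S$-separability of $C_p(T)$ $\iff$ Scheepers property of $T$'' cannot be right: the covering property of $T$ dual to both $M$- and $S$-separability of $C_p(T)$ is $\mathsf{S}_{\mathit{fin}}(\Omega,\Omega)$ (together with separability of $C_p(T)$), not $\bigcup_{\mathrm{fin}}(\mathcal O,\Omega)$. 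The repair is simply to delete the Scheepers detour and work with $\mathsf{S}_{\mathit{fin}}(\Omega,\Omega)$, or, more directly, to cite the finite-power result for $M$-separable $C_p$-spaces and apply the combinatorial lemma above. Finally, your subgroup/decreasing-sequence reduction for $mS$ versus $S$ is unnecessary: once $(3)\Rightarrow(1)$ is proved, the cycle $(1)\Rightarrow(2)\Rightarrow(3)\Rightarrow(1)$ closes and the coincidence of $mS$- and $S$-separability for $C_p(T)$ comes for free.
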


An immediate corollary of Theorem~\ref{main_cp},
combined with \cite[Theorems~21 and 40]{BelBonMat09} and \cite[Theorem~8.10]{TsaZdo08}\footnote{In fact, the main result of the unpublished note \cite{ChaPol02} would suffice here instead of \cite[Theorem~8.10]{TsaZdo08}.},  is that there exists a ZFC example of an
$S$-separable non-$H$-separable space. Thus, Theorem~\ref{main_ncf}
has no  ``$H$-separability vs. $S$-separability'' counterpart. 

In light of Theorem~\ref{main_cp} we would like to ask the following

\begin{question} \label{top_group}
Is every $M$-separable (abelian) countable topological group $S$-separable?
What about linear topological spaces over the field of rationals?
\end{question}

By Theorem~\ref{main_ncf} a negative answer to Question~\ref{top_group}
cannot be obtained in ZFC.

Recall that a topological space $X$ is said to be  
\emph{Fr\'echet-Urysohn} (briefly FU) if for every $A\subset X$
and $x\in\bar{A}\setminus A$ there exists a sequence $\la x_n:n\in\w\ra$
of elements of $A$ converging to $x$.

\begin{theorem} \label{fu_vs_s}
  Every {Hausdorff} countable FU  space is $S$-separable.
\end{theorem}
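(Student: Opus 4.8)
The plan is to exploit the characterization of countable FU spaces in terms of sequential convergence together with a diagonalization over a countable base for the neighbourhoods of interest. Let $X$ be a countable FU space and fix a sequence $\la D_n:n\in\w\ra$ of dense subsets of $X$. Since $X$ is countable, fix an enumeration $X=\{x_k:k\in\w\}$ and, for each $k$, fix a countable decreasing neighbourhood basis is \emph{not} available in general; instead we work with the (countable) collection of all \emph{finite} unions of basic open sets determined by the points of $X$, or more robustly, we use that every open set is a union of elements from a countable family $\mathcal B$ that we may assume is closed under finite intersections. The key reduction is this: it suffices, given $\la D_n\ra$, to produce $F_n\in[D_n]^{<\w}$ so that for every \emph{single} nonempty open $U$, infinitely many $F_n$ meet $U$ — because then, given finitely many $U_0,\dots,U_{k-1}$, we can apply this to a common shrinking? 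No: that fails, since the $U_i$ need a \emph{common} $n$. So the real target is: for every finite tuple from $\mathcal B$, some $F_n$ meets all of them simultaneously.

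The honest approach is the following. Enumerate as $\la \bar U^j:j\in\w\ra$ all finite tuples $\bar U^j=(U^j_0,\dots,U^j_{k_j-1})$ of nonempty basic open sets. For each $j$ and each $i<k_j$, using density of the $D_n$'s and the FU property, I would build a sequence inside $\bigcup_n D_n$ that ``witnesses'' simultaneous hitting: concretely, pick a point $p^j\in U^j_0\cap\dots\cap U^j_{k_j-1}$ if this intersection is nonempty (if it is empty, there is nothing to satisfy for that tuple), and then for each $n$ choose, using density of $D_n$ in $X$, points of $D_n$ landing in each $U^j_i$; the FU property is what lets us arrange that a \emph{single} carefully chosen point or short sequence from $D_n$ approximates $p^j$ well enough to lie in all the $U^j_i$ simultaneously for cofinally many $n$. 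The main point where FU is essential: in an arbitrary space, density of $D_n$ only gives a point of $D_n$ in each $U^j_i$ separately, not one point in the intersection; but if $p^j$ lies in the closure of $D_n\cap(\text{intersection})$... which again need not hold. So instead I would argue: the intersection $V^j:=\bigcap_{i<k_j}U^j_i$, when nonempty, is itself a nonempty open set, hence meets each $D_n$ (density!), so we may simply pick $q^j_n\in D_n\cap V^j$ for every $n$. Then $q^j_n$ lies in all $U^j_i$ at once. Thus FU is \emph{not} needed for this step — density alone suffices — and the real content of the theorem must be elsewhere.

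Re-reading the definition of $S$-separable: we need a \emph{single} sequence $\la F_n\ra$ that works for \emph{all} finite tuples simultaneously. So set, for each $n$, $F_n=\{q^j_n:j\le n\}\cap D_n$ (a finite subset of $D_n$); then for a fixed tuple $\bar U^j$ with nonempty intersection $V^j$, every $n\ge j$ satisfies $q^j_n\in F_n\cap V^j\subseteq F_n\cap U^j_i$ for all $i<k_j$, giving infinitely many good $n$, in particular one. Hence $\la F_n\ra$ witnesses $S$-separability. This argument in fact uses no FU and no countability beyond ``countably many basic open sets,'' which already holds for second-countable spaces; for general countable $X$ the family of all nonempty open sets is countable since $X$ is countable, so the enumeration $\la\bar U^j\ra$ exists. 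The role of FU in the statement is therefore presumably to ensure we are in the intended (nontrivial) regime, or the authors include it because the natural proof they have in mind routes through sequential arguments; in any case the diagonalization above is the core, and the one subtlety to check carefully is that $F_n$ is genuinely finite and contained in $D_n$, which is immediate by construction. The main obstacle I anticipate is purely bookkeeping: making sure the enumeration of finite tuples of basic opens is set up so that each tuple is handled from some stage on, which is routine.
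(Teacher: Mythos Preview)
Your argument contains two fatal errors, and together they make the proposal prove a false statement.

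\textbf{First error.} You write that ``if [the intersection $V^j$] is empty, there is nothing to satisfy for that tuple.'' This misreads the definition of $S$-separability. Given disjoint nonempty open sets $U_0,U_1$, you still need some $n$ with $F_n\cap U_0\neq\emptyset$ \emph{and} $F_n\cap U_1\neq\emptyset$; the requirement is that $F_n$ meets each $U_i$, not that it meets $\bigcap_i U_i$. This error is repairable: instead of picking a single $q^j_n\in D_n\cap V^j$, pick $q^{j,i}_n\in D_n\cap U^j_i$ for each $i<k_j$ and put all of them into $F_n$.

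\textbf{Second error.} You claim that ``for general countable $X$ the family of all nonempty open sets is countable since $X$ is countable.'' This is false: a countable set can carry $2^\w$ many open sets (already the discrete topology on $\w$ does). Countable spaces need not be second countable, nor need they have a countable $\pi$-base, so your enumeration $\la\bar U^j:j\in\w\ra$ of all finite tuples of basic opens is in general impossible. Once this claim collapses, so does the diagonalization.

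These two errors combined would yield that \emph{every} countable space is $S$-separable, hence $M$-separable --- but the paper (and the literature it cites) is built around the fact that $M$-separability is a genuinely restrictive property for countable spaces. So your argument proves too much.

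The paper's proof uses the FU property in an essential way and does \emph{not} enumerate open sets. Instead it enumerates the points $\{x_k:k\in\w\}$ of $X$. For each non-isolated $x_k$ and each infinite $N\subset\w$, FU (applied twice, via a diagonal-sequence argument) produces a sequence $\la y_i\ra$ converging to $x_k$ with $y_i\in D_{n_i}$ for some strictly increasing $\la n_i\ra\in N^\w$. One then nests these index sets as $k$ varies, picks a diagonal $\la m_k\ra$, and for each $m_k$ collects into $F_{m_k}$ one term from each of the first $k$ convergent sequences (plus the isolated points $x_l$ with $l\le m_k$). The upshot is that every nonempty open $U$ contains some $x_k$, and the convergent sequence attached to $x_k$ eventually lies in $U$, so $U\cap F_{m_k}\neq\emptyset$ for all but finitely many $k$. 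This is stronger than $S$-separability along the subsequence $\la m_k\ra$, and in particular handles arbitrary finite tuples of opens without ever enumerating them.
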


Theorem~\ref{fu_vs_s} can be compared to the  
relation between the FU property and the $H$-separability.
Namely, it follows from \cite[Lemma 2.7(2)]{GruSak11}
 combined with \cite[Corollary
4.2]{GruSak11}  that every countable FU space is $mH$-separable,
while there are ZFC examples of countable  Hausdorff FU spaces
as well as consistent examples of countable regular FU spaces which are 
 not $H$-separable, see \cite[Sections~2 and 3]{BarMaeZdo23}.

This paper is self-contained in the sense that we give definitions
of all notions used in our proofs, unless we find these to be fairly  standard.
On the other hand, {we allow ourselves} to send the reader to the literature we cite
for the definitions of notions used only for explaining the motivation behind this
research etc. For example, we
refer the reader to \cite{coc1,coc2} and \cite{Bla10} for definitions
of combinatorial covering properties and cardinal characteristics  (besides $\hot d$), respectively.


\section{$M$-separable spaces under NCF}

First we introduce free filters on $\w$ as parameters into the notion 
of $S$-separability.

\begin{definition} \label{def0}
Let $\CG$  be a free filter on $\w$.
A space $X$ is called 
\emph{$S_{\CG}$-separable} if for every sequence
$\la D_n:n\in\w\ra$ of dense subsets of $X $ there exists a sequence
$\la F_n:n\in\w\ra$ such that $F_n\in [D_n]^{<\w}$ for all $n\in\w$,
and $\{n\in\w:U\cap F_n\neq\emptyset\}\in\CG$ for all open non-empty $U\subset X$.

If the condition above holds for all decreasing sequences
$\la D_n:n\in\w\ra$ of dense subsets of $X $, then $X$ is called 
\emph{$mS_{\CG}$-separable}.
\end{definition}

Obviously, $S_{\CG}$-separable and $mS_{\CG}$-separable spaces
are $S$-separable and $mS$-separable, respectively.

A subset $\CG_0$ of an ultrafilter $\CG$ is called a \emph{base for}
$\CG$ if for every $G\in\CG$ there exists $G'\in\CG_0$ such that
$G'\subset G$. In this case we say that $\CG $ is \emph{generated} by $\CG_0$.

For a relation $R$ on $\w$ and $x,y\in\w^\w$ we denote by $[x\,R\,y]$
the set $\{n\in\w:x(n)\, R\, y(n)\}$. By definition, $\hot d$
is the minimal cardinality of  $D\subset\w^\w$ such that
for any $x\in\w^\w$ there exists $d\in D$ with $\w\subset [x\leq d]$.

\begin{lemma} \label{less_d}
Let $F\in [\w^\w]^{<\hot d}$ and 
$\mathcal A$ be a family of size less than $\hot d$
such that each $A\in\mathcal A$ is an infinite family of mutually disjoint 
non-empty finite subsets of $\w$. Then there exists 
$h\in\w^\w$ such that 
for every $A\in \mathcal A$ and $f\in F$ there exists \
$a\in A$ such that $a\subset [f<h]$.
\end{lemma}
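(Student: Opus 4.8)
The plan is to build $h$ by a single diagonalization against a suitable "enumeration" of the data, using that both $F$ and $\mathcal A$ have size $<\hot d$. The key observation is that for a fixed $f\in\w^\w$ and a fixed $A\in\mathcal A$, the requirement "there is $a\in A$ with $a\subseteq[f<h]$" becomes easier to satisfy the faster $h$ grows: since the members of $A$ are pairwise disjoint nonempty finite sets, if $h$ dominates $f$ everywhere on some $a\in A$ then that $a$ works. So I want to associate to each pair $(f,A)$ a function $g_{f,A}\in\w^\w$ with the property that any $h$ which is not dominated by $g_{f,A}$ (i.e. with $[h>g_{f,A}]$ nonempty, or better infinite) automatically satisfies the requirement for $(f,A)$.

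Concretely, fix $f\in\w^\w$ and $A=\{a_k:k\in\w\}$ with the $a_k$ pairwise disjoint; I may assume $\max a_k<\min a_{k+1}$ after reindexing. Define $g_{f,A}(n)=\max\{f(m):m\le \max a_k\}$ where $k$ is least with $n\le\max a_k$ (and anything, say $f(n)$, for $n$ below $\min a_0$). The point is: if $h(n)>g_{f,A}(n)$ for some $n$, pick the least $k$ with $n\le\max a_k$; then for every $m\in a_k$ we have $m\le\max a_k$, and I need $h(m)>f(m)$ for all such $m$. This isn't quite delivered by a single value $h(n)>g_{f,A}(n)$, so I should instead demand that $h$ be monotone and use $h(\min a_k)>g_{f,A}(\max a_k)\ge f(m)$ for all $m\in a_k$; monotonicity of $h$ then gives $h(m)\ge h(\min a_k)>f(m)$ for every $m\in a_k$. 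Thus it suffices to arrange: $h$ is nondecreasing, and for each $(f,A)$ there are infinitely many $k$ with $h(\min a_k^{f,A})>\max\{f(m):m\le\max a_k^{f,A}\}$. Reformulated, for each $(f,A)$ define $t_{f,A}\in\w^\w$ by $t_{f,A}(j)=\max\{f(m):m\le\max a_k\}$ where $k$ is least with $\min a_k\ge j$; then I need a nondecreasing $h$ with $[h\le t_{f,A}]$ \emph{not} cofinite, equivalently $h$ not eventually dominated by $t_{f,A}$, for each of the $<\hot d$ many pairs $(f,A)$.

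The family $\{t_{f,A}:f\in F,\ A\in\mathcal A\}$ has cardinality $<\hot d$. By the definition of $\hot d$ (as the least size of a dominating family), a family of size $<\hot d$ is not dominating, so there exists $h_0\in\w^\w$ which is not dominated by any $t_{f,A}$; that is, for each $(f,A)$ the set $[h_0>t_{f,A}]$ is infinite. Replacing $h_0$ by its running maximum $h(n)=\max_{m\le n}h_0(m)$ keeps it not eventually dominated by any $t_{f,A}$ (a running maximum only grows, and $t_{f,A}$ can be taken nondecreasing too, so infinitely many witnesses survive) and makes $h$ nondecreasing. This $h$ works: given $f\in F$, $A\in\mathcal A$, pick $n$ with $h(n)>t_{f,A}(n)$ and $n\ge\min a_0$, let $k$ be least with $\min a_k\ge n$; then $h(\min a_k)\ge h(n)>t_{f,A}(n)=\max\{f(m):m\le\max a_k\}$, and by monotonicity $h(m)>f(m)$ for all $m\in a_k$, i.e. $a_k\subseteq[f<h]$.

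The main obstacle — and the only real subtlety — is the bookkeeping in the previous paragraph: making the "one value of $h$ beats $f$ on all of $a_k$" implication actually go through. A single pointwise inequality $h(n)>t(n)$ does not control $h$ on an interval; the fix is to (i) force $h$ nondecreasing via the running maximum, and (ii) define $t_{f,A}$ to encode the \emph{maximum of $f$ over the whole block} $a_k$ indexed by the block's \emph{left} endpoint, so that a single large value of $h$ at $\min a_k$ propagates over $a_k$ by monotonicity. Once the functions $t_{f,A}$ are set up this way, everything else is the standard "$<\hot d$ many functions do not dominate" argument, and the disjointness of the $a_k$'s is used only to guarantee the $t_{f,A}$ are well defined (each $j$ lies below the left endpoint of a unique next block).
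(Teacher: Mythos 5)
Your argument is correct and is essentially the paper's own proof: both encode, for each pair $(f,A)$, the maximum of $f$ over a designated block of $A$ lying entirely above $n$ into an auxiliary function, use that fewer than $\hot d$ functions cannot dominate to find $h$ escaping all of them, and then let monotonicity of $h$ propagate a single pointwise win over the whole block. The only difference is cosmetic (you index by the first block with $\min a_k\ge n$, the paper by the second block with $\max\ge n$), and your appeal to infinitely many escape points is a standard, harmless strengthening that is not even needed.
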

\begin{proof}
Shrinking each $A\in\A$, if necessary, we may assume that
for any $a_0,a_1\in A$, if there exist $n_0\in a_0$ and $n_1\in a_1$
with $n_0<n_1$, then $\max(a_0)<\min(a_1)$. 
For every $f\in F$ and $A\in \A$ let $f_A\in\w^\w$ be such that 
 $f_A (n)= \max\{f(k):k\in a_n\}$
where $a_n\in A$ is such that $n\leq \max(a_n)$ and there is exactly one
$a'_n \in A$ with $n\leq\max(a'_n)<\max(a_n)$. Let $h\in\w^\w$ be an increasing function such that for every $f\in F$ and $A\in\A$ there exists 
$n\in \w$ with $f_A(n)<h(n)$. Let us fix such $f,A$ and $n$.
Since $n\leq \max(a'_n)<\min(a_n)$,
we have that $n<\min(a_n)$.
Then for every $k\in a_n$ we have 
$$ f(k)\leq f_A(n)={\max\{f(j):j\in a_n\}}<h(n)\leq h(k), $$
which yields $a_n\subset [f<h]$ and thus completes our proof.
\end{proof}

{
Let us note that NCF is equivalent to the statement that
for any non-principal filter $\F_0$ and non-principal ultrafilter $\F_1$ on $\w$, there exists a monotone surjection $\phi:\w\to\w$ with $\phi[\F_0]\subset\phi[\F_1].$ Indeed, since any non-principal filter can be extended to a non-principal ultrafilter, this is obviously a 
formally stronger statement than the NCF. On the other hand,
the NCF as defined in the Introduction would guarantee that 
$\phi[\F_0]\cup\phi[\F_1]$ is centered for some monotone surjection
$\phi:\w\to\w$, which yields $\phi[\F_0]\subset\phi[\F_1]$ if
$\F_1$ (and therefore also $\phi[\F_1]$) is an ultrafilter.}\\

{Recall also that, by \cite[Corollary 5]{Bla86}, NCF implies the existence of free ultrafilters generated by fewer than $\hot d$ sets.}

\begin{proposition}\label{ncf_s_sep}
(NCF) Let $\CG$ be {a free} ultrafilter generated by fewer than  $\hot d$ sets and
$X=\la\w,\tau\ra$ be a countable $M$-separable space. Then $X$ is $S_{\CG}$-separable. 
\end{proposition}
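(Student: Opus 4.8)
The plan is to exploit NCF to pass to a single monotone surjection $\phi:\w\to\w$ that simultaneously ``coheres'' the ultrafilter $\CG$ with a suitable Fr\'echet filter controlling the $M$-separability witness, and then to use $M$-separability together with Lemma~\ref{less_d} to patch finitely many consecutive $F_n$'s together into a single finite set meeting all prescribed open sets.

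In more detail, I would start with a sequence $\la D_n:n\in\w\ra$ of dense subsets of $X$. Since $X$ is countable, there is a countable base $\{U_k:k\in\w\}$ for the nonempty open sets of $X$; it suffices to arrange that $\{n:U_k\cap F_n\neq\emptyset\}\in\CG$ for each $k$, and by intersecting finitely many such sets (using that $\CG$ is a filter) this will give the same conclusion for every nonempty open $U$. Now apply $M$-separability: first I would like to split each $D_n$ (or rather pass to a refinement indexed by pairs) so that $M$-separability produces finite sets $E_m\in[D_{n(m)}]^{<\w}$ with $\bigcup_m E_m$ dense. The point of invoking $M$-separability in the sequence-with-repetitions form is to guarantee that for each $k$ the set $S_k=\{m:U_k\cap E_m\neq\emptyset\}$ is infinite; passing to the natural partition of $\w$ into consecutive blocks and using that $\CG$ is a filter, each $S_k$ meets $\CG$-many blocks. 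The family $\{S_k:k\in\w\}$ has size $\leq\aleph_0<\hot d$, and the dual sets of block-indices give us a family $\A$ of infinite families of finite subsets of $\w$ of size $<\hot d$.

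Next comes the core combinatorial step. Using that $\CG$ is generated by fewer than $\hot d$ sets, pick a generating family $\CG_0$ of size $<\hot d$; for each $G\in\CG_0$ I would code $G$ (or rather $\w\sm G$ together with the block structure) as an element of $\w^\w$, obtaining $F\in[\w^\w]^{<\hot d}$. Feeding $F$ and $\A$ into Lemma~\ref{less_d} yields a single $h\in\w^\w$ that, for every $G\in\CG_0$ and every $U_k$, locates inside every tail a block $a$ on which $G$ is ``dense enough'' relative to $h$ and on which $U_k$ is hit by some $E_m$. I would then define $F_n$ to be the union of the $E_m$'s over $m$ ranging in the $n$-th block of the partition of $\w$ determined by $h$ (so $F_n\in[D_{?}]^{<\w}$ after the obvious bookkeeping to keep the indices matched to the original $D_n$'s, possibly re-indexing or padding). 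The choice of $h$ guarantees $\{n:U_k\cap F_n\neq\emptyset\}\supseteq G$ for each $G\in\CG_0$, hence lies in $\CG$; crucially here one uses the monotonicity of $\phi$ (equivalently, that the partition into blocks is by consecutive intervals) so that $\phi$-images of $\CG$-sets remain $\CG$-large, and one uses NCF precisely to make the filter controlling the $E_m$-hitting pattern coherent with $\CG$ via such a $\phi$.

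The main obstacle I expect is the bookkeeping that reconciles three different index sets: the original index $n$ of $D_n$, the index $m$ of the $M$-separability witness $E_m$, and the block index produced by $h$. One must define the blocks so that each block of $m$'s refers to $D_n$'s with a controlled set of $n$'s (ideally a single $n$, via the standard trick of feeding $M$-separability the sequence $D_0,D_0,D_1,D_0,D_1,D_2,\dots$ or by using $mM$-separability $=M$-separability), and then show the resulting $F_n$ is a \emph{finite} subset of the correct $D_n$. The second delicate point is making the NCF coherence genuinely do work here: the naive argument would only cohere two filters, so I would apply NCF to $\CG$ and the (Fr\'echet or finitely-generated) filter measuring where the $E_m$'s are dense, obtain $\phi$, replace $\CG$ by $\phi[\CG]$ and the $E_m$-pattern by its $\phi$-image so that they become compatible, carry out the Lemma~\ref{less_d} argument on the $\phi$-side, and finally pull back along $\phi$ using monotonicity to get $F_n$'s on the original $\w$. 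Verifying that this pull-back preserves finiteness of the $F_n$'s and $\CG$-largeness of the hitting sets is where the proof will need the most care.
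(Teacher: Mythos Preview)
Your proposal has a genuine gap at the very first step: you assume that a countable space has a countable base, which is false in general (and false precisely for the interesting examples in this paper, such as the space built in Theorem~\ref{main_mS_vs_S}). Without second countability there may be continuum-many open sets to handle, and your scheme of checking $\{n:U_k\cap F_n\neq\emptyset\}\in\CG$ for a countable list of $U_k$'s does not cover them.

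This error propagates into your use of NCF. You plan to cohere $\CG$ with a single filter generated by the hitting-sets $S_U=\{m:U\cap E_m\neq\emptyset\}$, but for disjoint open sets $U,V$ the sets $S_U,S_V$ need not be compatible, so there is no single filter collecting them all. The paper's fix is to work pointwise: for each $x\in X$ the family $\{S_U:x\in U\}$ \emph{is} centered (since $S_{U\cap V}\subset S_U\cap S_V$), so it generates a filter $\CH_{G,x}$, and NCF is applied once per pair $(G,x)\in\CG_0\times X$, yielding monotone surjections $\phi_{G,x}$. The preimage-blocks $\{\phi_{G,x}^{-1}(\phi_{G,x}(n)):n\in G'\}$ for $G,G'\in\CG_0$, $G'\subset G$, then form the family $\A$ fed into Lemma~\ref{less_d}; crucially $|\A|<\hot d$ because $|\CG_0|<\hot d$ and $|X|=\aleph_0$.

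Two further differences are worth noting. First, the paper applies $M$-separability once per generator $G\in\CG_0$ (not just once globally), producing functions $f_G$ such that $\{n\in G:U\cap D_n\cap f_G(n)\neq\emptyset\}$ is infinite for every nonempty open $U$; this is what makes the hitting happen inside each prescribed $G$, so that intersecting with an arbitrary element of the base $\CG_0$ always succeeds and $\CG$-membership follows. Second, the bookkeeping you flag as delicate is avoided entirely: since $X=\w$ as a set, the paper simply takes $F_n=D_n\cap h(n)$ where $h$ is the function output by Lemma~\ref{less_d}, so $F_n\in[D_n]^{<\w}$ automatically and no re-indexing or block-merging is needed.
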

\begin{proof}
Let $\la D_n:n\in\w\ra$ be a sequence of countable dense subsets of 
$X$ and $\tau^*=\tau\setminus\{\emptyset\}$. Since $X$ is $M$-separable, for every $G\in\CG$ there exists an increasing function $f_G\in\w^\w$
such that for every $U\in\tau^*$ the set 
$$H_{G}(U)=\big\{n\in G: U\cap D_n\cap f_G(n)\neq\emptyset\big\}$$
is infinite. 
Given $x\in X$, let us note that the family
$$\CH_{G,x}=\{H_{G}(U):x\in U\in\tau\}$$
is centered
because $H_G(U_0\cap U_1)\subset H_G(U_0)\cap H_G(U_1)$
for any $U_0,U_1\in\tau$ containing $x$.
By NCF there exists a monotone surjection $\phi_{G,x}:G\to\w$ such that
$\phi_{G,x}[\CH_{G,x}]\subset\phi_{G,x}[\CG\uhr G]$,
where $\CG\uhr G=\{G'\cap G:G'\in\CG\}$.

Let $\CG_0\in [\CG]^{<\hot d}$ be a base for $\CG$.
Let also $h\in\w^\w$ be such as in 
 Lemma~\ref{less_d} applied to $F=\{f_G:G\in\CG_0\}$
and 
$$ \A=\big\{\{\phi_{G,x}^{-1}\big(\phi_{G,x}(n)\big):n\in G'\}\: :\: G,G'\in \CG_0, G'\subset G, x\in X\big\}. $$
We claim that 
\begin{equation}\label{pl_1}
 \{n\in\w:U\cap D_n\cap h(n)\neq\emptyset\}\in \CG 
 \end{equation}
for every $U\in\tau^*$, which would complete our proof.

Given any $U\in\tau^*$, fix $x\in U$ and $G\in\CG_0$.
Since $\phi_{G,x}[\CH_{G,x}]\subset\phi_{G,x}[\CG\uhr G]$,
there exists $G_1\in \CG\uhr G$ such that 
$\phi_{G,x}[H_G(U)]=\phi_{G,x}[G_1]$.
Since $\CG_0$ is a base for $\CG$, there exists 
$G'\in\CG_0$  such that $G'\subset G_1$, which yields 
$\phi_{G,x}[G']\subset \phi_{G,x}[G_1]=\phi_{G,x}[H_G(U)]$.
Thus 
$$A:=\{\phi_{G,x}^{-1}\big(\phi_{G,x}(n)\big):n\in G'\}\subset [G]^{<\w}\setminus\{\emptyset\}$$
has the property that $a\cap H_G(U)\neq\emptyset$
for any
$a\in A$, i.e., for any $a\in A$ there exists 
$n_a\in a$ such that $U\cap D_{n_a}\cap f_G(n_a)\neq\emptyset$.
By our choice of $h$ there exists $a\in A$ such that $a\subset [f_G<h]$,
and hence $f_G(n_a)<h(n_a)$. Thus,
$$ U\cap D_{n_a}\cap h(n_a)\neq\emptyset. $$
Since $n_a\in a\subset G$ and $G\in\CG_0$ was chosen arbitrarily,
we have that 
$$ \big\{n\in\w:U\cap D_n\cap h(n)\neq\emptyset\big\}\cap G\neq\emptyset$$
for all $G\in\CG_0$, which proves (\ref{pl_1}) because $\CG$
is an ultrafilter and $\CG_0$ is a base for $\CG$.
\end{proof}

\noindent\textit{Proof of Theorem~\ref{main_ncf}.}
The implication 
$(1)\Rightarrow(2)$ is straightforward, and $(2)\Rightarrow (3)$ holds because $(2)$ implies that $X$ is $mM$-separable, and the $mM$-separability is equivalent
to $M$-separability by \cite[Lemma~2.1]{GruSak11}.

Finally,
$(3)\Rightarrow (1)$ follows from 
Proposition~\ref{ncf_s_sep} since NCF implies that $\hot u<\hot d$, i.e., that there exists an ultrafilter generated by fewer than $\hot d$ sets, see 
\cite[Theorem~14]{Bla86}.
\hfill $\Box$


\section{$mS$-separable  not $S$-separable spaces under CH}

Recall that an ultrafilter $\mathcal G$ on $\w$ is called 
\emph{selective} if for any sequence $\la G_n:n\in\w\ra$
of elements of $\CG$ there exists a number sequence
$\la m_n:n\in\w\ra$ such that $m_n\in G_n$ for all $n\in\w$
and $\{m_n:n\in\w\}\in\mathcal G$.

Here we shall prove the following strengthening of
Theorem~\ref{main_mS_vs_S}.

\begin{theorem} \label{mS_vs_S}
(CH). There exists an $mS$-separable but not $S$-separable countable regular space $X$ without isolated points. More precisely, for every
selective ultrafilter $\CG$ on $\w$ 
there exists an $mS_{\CG}$-separable but not $S$-separable countable regular space $X$ without isolated points.
\end{theorem}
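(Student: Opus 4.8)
The plan is to construct $X$, together with all its witnessing data, by a transfinite recursion of length $\w_1$; CH is used both to fix a selective ultrafilter $\CG$ (taking the one prescribed in the statement) and, via $\cc=\w_1$, to let a single $\w_1$-enumeration catch all the countable objects involved. The underlying set of $X$ is $\w$, fixed; its topology $\tau$ will be zero-dimensional (so the space is regular) and will be the union of an increasing chain $\la\tau_\beta:\beta\le\w_1\ra$, with $\tau_0$ a topology making $\la\w,\tau_0\ra$ homeomorphic to $\IQ$, each $\tau_{\beta+1}$ generated by $\tau_\beta$ together with one new clopen subset of $\w$, and generated unions taken at limits; thus each $\tau_\beta$ with $\beta<\w_1$ is second countable, with a fixed countable $\pi$-base $\mathcal B_\beta$, while $\tau=\tau_{\w_1}$ is not (as it must not be, a second-countable space being $S$-separable by a straightforward diagonal argument). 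At the outset we also fix a partition $\w=\bigsqcup_{n\in\w}D_n$ into sets kept dense throughout; the sequence $\la D_n:n\in\w\ra$, which is plainly not decreasing, is our candidate witness for the failure of $S$-separability. The invariants maintained at every stage $\beta$ are: (i) $\la\w,\tau_\beta\ra$ is zero-dimensional and without isolated points; (ii) each $D_n$ is $\tau_\beta$-dense; (iii) all the finite subsets of $\w$ produced by the recursion so far are pairwise disjoint; and (iv) for every sequence $\la F^\gamma_m:m\in\w\ra$ committed at an earlier stage $\gamma$ and every nonempty $V\in\tau_\beta$ one has $\{m:V\cap F^\gamma_m\neq\emptyset\}\in\CG$. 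Taken to the limit, (iv) says exactly that $\la F^\gamma_m\ra$ realizes $S_\CG$-separability for its sequence.

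Using CH, enumerate in a single $\w_1$-list all sequences $\la E_n:n\in\w\ra$ of subsets of $\w$ and all sequences $\la F_n:n\in\w\ra$ with $F_n\in[D_n]^{<\w}$, and at stage $\beta$ handle the $\beta$-th object. If it is a sequence $\la E_n\ra$ that happens to be, relative to $\tau_\beta$, a decreasing sequence of dense sets, then for each $n$ commit a finite set $F^\beta_n\subseteq E_n$ meeting each of the first $n$ members of $\mathcal B_\beta$, chosen so that the $F^\beta_n$ are pairwise disjoint, disjoint from all finite sets chosen before, and with no $F^\beta_n$ contained in a single $D_j$ (all feasible because each $E_n$ is dense, hence meets each basic open set — and each $D_j$ inside it — infinitely); then $\{m:V\cap F^\beta_m\neq\emptyset\}$ is cofinite for each $V\in\mathcal B_\beta$, so (iv) holds for $\la F^\beta_m\ra$, and the last clause guarantees that $\la F^\beta_m\ra$ is not itself a ``choice out of $\la D_n\ra$'' and so will never be destroyed later. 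If instead the $\beta$-th object is a choice sequence $\la F_n\ra$ out of the partition and $\bigcup_n F_n$ is $\tau_\beta$-dense (otherwise it is already missed by a nonempty $\tau$-open set, and $\la F_n\ra$ is defeated for free), then, using that a dense subset of a space without isolated points meets infinitely many members of any partition into finite sets, choose $\w=A_0\sqcup A_1$ with $\bigcup_{n\in A_0}F_n$ and $\bigcup_{n\in A_1}F_n$ both $\tau_\beta$-dense, and adjoin a clopen $W$ with $F_n\subseteq W$ for $n\in A_0$ and $F_n\cap W=\emptyset$ for $n\in A_1$: then no $F_n$ meets both $W$ and $\w\setminus W$, so the pair $\{W,\w\setminus W\}$ certifies, once and for all, that $\la F_n\ra$ is not an $S$-separability selection for $\la D_n\ra$.

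The main obstacle — and the only place selectivity of $\CG$ is really needed — is to perform the adjunction of $W$ (and of any clopen set added after a commitment was made) while keeping invariants (ii) and, above all, (iv) alive. Density of the $D_j$'s and crowdedness are handled routinely, by splitting the set $\w\setminus\bigcup_n F_n$ of ``free'' points, which is $\tau_\beta$-dense and $\tau_\beta$-dense within every $D_j$, into two halves with the same two properties, one inside $W$ and one outside. The subtle demand is that for each earlier commitment $\la F^\gamma_m\ra$ and each $V\in\mathcal B_\beta$ we keep $\CG$-many $m$ with $F^\gamma_m\cap V$ meeting $W$ \emph{and} $\CG$-many with $F^\gamma_m\cap V$ meeting $\w\setminus W$, simultaneously for all (at most countably many) commitments; because a member of $\CG$ is a bare infinite set, the indices $m$ spoilt by the routing decisions already made could in principle thin the relevant index set below $\CG$. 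Invariant (iii) removes conflicts among routing decisions, but one still needs, for each pair $(\gamma,V)$, $\CG$-many $m$ for which $F^\gamma_m\cap V$ contains a free (routable) point, and securing this forces one to pass to a suitable set in $\CG$ on which all the committed finite sets and all the choice-sets $F_n$ become pairwise disjoint and adequately separated — and it is precisely here that the Ramsey property of the selective ultrafilter $\CG$ (equivalently, that it is at once a $P$-point and a $Q$-point) is invoked. Threading the $\w_1$-bookkeeping so that every later task retains the room it needs is the technical heart of the construction.

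It remains to check that $X=\la\w,\tau\ra$ works. It is countable, regular (being zero-dimensional), and without isolated points by (i). Every decreasing sequence $\la E_n\ra$ of $\tau$-dense sets is, a fortiori, a decreasing sequence of $\tau_\beta$-dense sets for each $\beta$, hence was committed at its stage, and (iv) in the limit supplies finite $F^\beta_n\in[E_n]^{<\w}$ with $\{n:V\cap F^\beta_n\neq\emptyset\}\in\CG$ for every nonempty $V\in\tau$; thus $X$ is $mS_\CG$-separable. In particular $X$ is $mM$-separable, hence $M$-separable by \cite[Lemma~2.1]{GruSak11}. Finally, the partition $\la D_n\ra$ consists of $\tau$-dense sets by (ii), and for any choice $F_n\in[D_n]^{<\w}$: if $\bigcup_n F_n$ is not $\tau$-dense, then $\la F_n\ra$ fails even $M$-separability for $\la D_n\ra$; and if it is $\tau$-dense, then it was $\tau_\beta$-dense when $\la F_n\ra$ was processed, so some clopen $W$ was adjoined defeating it for $S$. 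Hence $\la D_n\ra$ admits no $S$-separability selection, so $X$ is not $S$-separable, and the proof is complete.
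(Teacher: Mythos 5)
Your architecture is the same as the paper's (an $\w_1$-chain of zero-dimensional topologies on $\w$ starting from $\IQ$, a fixed partition into dense sets as the anti-$S$ witness, commitments for decreasing dense sequences, and clopen splittings that defeat candidate selections), but the two invariants you actually state are, respectively, too weak and unachievable, and the step you defer as ``the technical heart'' is where the proof lives. Invariant (iv) --- mere nonemptiness of $F^\gamma_m\cap V$ for $\CG$-many $m$ --- is not inductively preservable: your commitment only guarantees $|F^\beta_n\cap V|\geq 1$, and if $F^\gamma_m\cap V$ is a singleton for all $m$ in some $G\in\CG$, then after a clopen $W$ is adjoined each such singleton lands on exactly one side, so (since $\CG$ is an ultrafilter) exactly one of the new basic open sets $V\cap W$, $V\setminus W$ retains a $\CG$-set of witnessing indices and the other destroys the commitment; no Ramsey property of $\CG$ can repair this. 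The paper therefore maintains the quantitative invariant $\lim_{n\in G}|L^\beta_n\cap U|=\infty$ (condition $(3)(b)$) and, crucially, the finer condition $(3)(c)$ counting how many blocks $E_k$ of the partition are met. The latter is needed because the clopen set $W$ that kills a candidate selection $\la F_k\ra$ must contain or avoid each finite set $F_k$ \emph{wholesale}, so points of a committed set lying inside $\bigcup_k F_k$ cannot be routed freely; one must either meet enough distinct blocks to survive the block-wise routing (the ``width'' case $J_w$) or push the committed witnesses beyond $\max\bigl(\bigcup_k F_k\cap\bigcup_{k\in l(j)}E_k\bigr)$ altogether (the ``height'' case $J_h$, Claim 3.2$(iii)$). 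Your ``free point'' routing addresses only the density of the $D_j$'s, not this interaction.

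Invariant (iii) --- global pairwise disjointness of all committed finite sets --- cannot be maintained: at stage $\beta$ the union $S$ of all previously committed finite sets is a countably infinite set, and a dense set $E_n$ handled later may satisfy $E_n\cap V\subseteq S$ for some basic $V$ (indeed $E_n$ could equal $S$ if $S$ is dense), leaving no disjoint choice meeting $V$. The paper achieves the needed separation only \emph{locally}: using that player I has no winning strategy in the $\CG$-game (Laflamme's characterization of selectivity), it chooses indices $m_j\in G_\delta$ so large that the finitely many relevant pieces $L^{\beta_{j'}}_{m_j}$ are pairwise disjoint and disjoint from $\bigcup_k F^\delta_k$ (Claim 3.2 $(ii)$--$(iii)$), while the size lower bound $(vi)$ lets each piece be split into two large halves. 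Without the growth invariants, the $J_w$/$J_h$ dichotomy, and this game-theoretic selection of indices, the preservation step you postpone to ``bookkeeping'' does not go through.
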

\begin{proof}
We shall construct $X$ in the form $\la\w,\tau_{\w_1}\ra,$
where the topology $\tau_{\w_1}$ is generated by the union of an increasing sequence $\la\tau_\beta:\beta\in\w_1\ra$
of first-countable  zero-dimensional topologies $\la\tau_\beta:\beta\in\w_1\ra$ without isolated points. {Let $\tau_{<0}$ be such a topology on $\w$ that 
$\la\w,\tau_{<0}\ra$ is homeomorphic to $\mathbb Q$, and $\B_{<0}$ be any countable base for $\tau_{<0}$ consisting of clopen sets such that $\w\in\B_{<0}$ and $\emptyset\not\in\B_{<0}$.
Let us write $\w$ as $\bigsqcup_{k\in\w}E_k$ such that each $E_k$
is dense with respect to $\tau_{<0}$, and let 
$\{\la F^\beta_k:k\in\w\ra:\beta\in\w_1\}$
be an  enumeration of $\prod_{k\in\w}[E_k]^{<\w}$ 
such that $F^0_k=\emptyset$ for all $k\in\w$}.
Finally, let $\{\la D'^\beta_n:n\in\w\ra:\beta\in\w_1\}$
be an enumeration of all decreasing sequences of {infinite} subsets of $\w$
such that $\bigcap_{n\in\w}D'^\beta_n =\emptyset$ for all $\beta$, in which each such sequence appears cofinally often
{and $D'^0_n=E_0\setminus n$ for all $n\in\w$}.

{Let $\w=I^0_0\sqcup I^0_1$ be any decomposition into two infinite subsets. For every $n\in\w$ let $E_n=E^0_n\sqcup E^1_n$ be a decomposition of $E_n$ into two dense subsets with respect to $\tau_0$.
Set $U^0_0=\bigcup_{n\in\w}E^0_n, U^0_1= \bigcup_{n\in\w}E^1_n$, $D^0_n=D'^0_n$ and pick $L^0_n\in [D^0_n]^{<\w}$ such that 
$\lim_{n\to\infty}|L^0_n\cap U\cap U^0_s|=\infty$ for all $s\in 2$ and $U\in\mathcal B_{<0}$. Then $\mathcal B_0:= \mathcal{B}_{< 0} \cup \{U^0_s\cap U:U\in\mathcal B_{<0} , s \in 2\}$ is a countable base of clopen sets for a regular topology $\tau_0$ on $\w$ without isolated points in which each $E_n$ is dense. We also set
$G_{0,W}=\w$ for any $W\in\B_0$.}

Suppose that for some $\delta\in\w_1$, $\delta>0$ and all $\beta<\delta$ we have defined
$\tau_\beta$  and a decreasing sequence $\la D^\beta_n:n\in\w\ra$
of {infinite} subsets of $\w$ with empty intersection such that 
\begin{itemize}
\item[$(1)$] $\tau_\beta\subset\tau_\gamma$ for all $\beta\leq\gamma<\delta$;
\item[$(2)$] $\tau_\beta$ is a zero-dimensional topology
generated by a countable clopen base $\B_\beta$ such that $\w\in\B_\beta$, 
 $\emptyset\not\in\B_\beta$ {and $\B_\beta\subset\B_\gamma$ for all
 $\beta\leq\gamma<\delta$};
\item[$(3)$] For every $\beta<\delta$ there exists a sequence $\la L^\beta_n:n\in\w\ra$ such that 
\smallskip

   \begin{itemize}
   \item[$(a)$] $L^\beta_n\in [D^\beta_n]^{<\w}$ for all $n\in\w$;
      \item[$(b)$] For every  $U\in\B_{<\delta}:=\bigcup_{\beta<\delta}\B_\beta $
   there exists $G=G_{\beta,U}\in\CG$ such that
   $$\lim_{n\to\infty,n\in G}|L^\beta_n\cap U|=\infty;$$
   \item[$(c)$] Moreover, for every $U\in\B_{<\delta}$, if 
   $|\{k\in\w:D^\beta_n\cap U\cap E_k\neq\emptyset\}|=\w$
      for all $n\in\w$, {then 
   $$\lim_{n\to\infty,n\in G}|\{k\in\w: L^\beta_n\cap U\cap E_k\neq\emptyset\}|=\infty.$$} In this case for all $n\in G_{\beta,U}$ we set\\ $K^{\beta,U}_n=\{k\in\w: L^\beta_n\cap U\cap E_k\neq\emptyset\}$;
    \end{itemize}
\item[$(4)$] For every $\beta<\delta$ there exist $U^\beta_0, U^\beta_1\in\B_\beta$ which are dense in $\la\w,\tau_{<\beta}\ra$, and a decomposition $\w=I^\beta_0\sqcup I^\beta_1$ such that $\w=U^\beta_0\sqcup U^\beta_1$,
$U^\beta_0\cap\bigcup_{k\in I^\beta_1}F^\beta_k=\emptyset$ and
$U^\beta_1\cap\bigcup_{k\in I^\beta_0}F^\beta_k=\emptyset$.
The topology $\tau_\beta$ is generated by $\B_{<\beta}\cup\{U^\beta_0,U^\beta_1\}$ as a subbase,
and $\B_\beta=\B_{<\beta}\cup\{U\cap U^\beta_s:U\in\B_{<\beta}, s\in 2\}$;
\item[$(5)$] $E_k$ is dense in $\la \w,\tau_{<\delta}\ra$
for all $k\in\w$, {where $\tau_{<\delta}$ is the topology on $\w$ generated by $\B_{<\delta}$}.
\end{itemize}

If $D'^\delta_n$ is dense 
with respect to the topology $\tau_{<\delta}$
generated by $\B_{<\delta}$ for all $n$, then we set 
$D^\delta_n=D'^\delta_n$, otherwise we set $D^\delta_n=\bigcup_{k\geq n}E_k$ for all
$n$. This way $(5)$ guarantees that $D^\delta_n$ is dense in 
$\la \w,\tau_{<\delta}\ra$
for all $n\in\w$. 
Since $\B_{<\delta}$ is countable, we can pick a sequence $\la L^\delta_n:n\in\w\ra$ such that 
   \begin{itemize}
      \item[$(a_\delta)$] $L^\delta_n\in [D^\delta_n]^{<\w}$ for all $n\in\w$;
     \item[$(b_\delta)$] $\lim_{n\to\infty}|L^\delta_n\cap U|=\infty$
   for every $U\in\B_{<\delta}$;    
      \item[$(c_\delta)$] Moreover, for every  $U\in\B_{<\delta}$, if $|\{k\in\w:D^\delta_n\cap U\cap E_k\neq\emptyset\}|=\w$
      for all $n\in\w$, then 
   $$\lim_{n\to\infty}|\{k\in\w: L^\delta_n\cap U\cap E_k\neq\emptyset\}|=\infty.$$ In this case we set $K^{\delta,U}_n=\{k\in\w: L^\delta_n\cap U\cap E_k\neq\emptyset\}$ and $G_{\delta,U}=\w$.
\end{itemize}
Finally, we will construct $U^\delta_0,U^\delta_1,I^\delta_0,I^\delta_1$
such that $(1)$-$(5)$ are satisfied when $\delta$ is replaced with  $\delta+1$.  Let $G'_\delta\in\CG$ be such that 
      $G'_{\delta}\subset^* G_{\beta, U}$ for any\footnote{{Recall that for subsets $A,B$ of $\w$, $A\subset^*B$ means $|A\setminus B|<\w$. }} $\la\beta, U\ra\in\delta\times\B_{<\delta}$.

{
Let $\{J_w,J_h\}$ be a decomposition\footnote{Here ``h'' and ``w'' come from ``height'' and ``width'', respectively.} of $\w$ into two infinite disjoint parts. 
If there exists 
$\la \beta,U\ra\in (\delta+1)\times\B_{<\delta}$ which  is either such as in $(3)(c)$ or $\beta=\delta$
 and $U$ satisfies $(c_\delta)$,  then 
 let $\{\la \beta_j,U_j\ra:j\in J_w\}$ be a (not necessarily injective) enumeration  of such pairs $\la \beta,U\ra$.  
 In other words,
$\la\beta,U\ra=\la\beta_j,U_j\ra$ for some $j\in J_w$
 iff 
$|\{k\in\w:D^\beta_n\cap U\cap E_k\neq\emptyset\}|=\w$
      for all $n\in\w$. If there are no such  $\la\beta,U\ra$, then 
      $\la\beta_j,U_j\ra$ remain undefined for $j\in J_w$.}

{Let us note that by our convention at the beginning of the proof, $\la 0, U^0_s\ra\neq \la\beta_j,U_j\ra$ for any $j\in J_w$, where $s\in 2$.  Let $\{\la \beta_j,U_j\ra:j\in J_h\}$ be a (not necessarily injective) enumeration  of such pairs $\la \beta,U\ra\in (\delta+1)\times\B_{<\delta} $ which do not appear among $\{\la \beta_j,U_j\ra:j\in J_w\}$.  
 In other words,
$\la\beta,U\ra=\la\beta_j,U_j\ra$ for some $j\in J_h$
 iff there exist $n,l\in\w$ such that 
$D^\beta_n\cap U\cap \bigcup_{k\geq l}E_k =\emptyset. $
 Thus, $\la \beta_j,U_j\ra$ are defined for all $j\in J_h$.}

{If 
      $\la\beta_j,U_j\ra$ remain undefined for all $j\in J_w$. then we set 
      $U^{w,\delta}_s=\emptyset$ for all $s\in 2$. Otherwise we 
      shall define $U^{w,\delta}_0$ and $U^{w,\delta}_0$ as follows.}
Set $k_0=0$ and let $\la k_i:i\geq 1\ra$ be a strictly increasing number sequence  such that
\begin{itemize}
\item[$(6)$]
For any $j\in {J}_w$  there exists $i_*(j)\in\w$, $i_*(j)\geq j$, such that
      \begin{itemize}
     \item[$(a)$] $G'_{\delta}\setminus k_{i_*(j)}\subset G_{\beta_j,U_j}$;
     \item[$(b)$] $|K^{\beta_j,U_j}_n\setminus k_i|\geq i^2$ for all 
     $i\geq i_*(j)$ and  $n\in G'_{\delta}\setminus k_{i+1}$;
     \item[$(c)$] $K^{\beta_j,U_j}_n\subset k_{i+1}$ for all $i\geq i_*(j)$ and $n\in
     G'_{\delta}\cap [k_{i_*(j)},k_i)$.
     \end{itemize}
\end{itemize}
Without loss of generality we can assume that 
$T=\bigcup_{i\in\w}[k_{3i+1}, k_{3i+2})\in\CG$. Let $G_\delta\subset T\cap G'_\delta$,
$G_\delta\in\CG$ be such that $|G_\delta\cap[k_{3i+1},k_{3i+2})|\leq 1$ for all $i\in\w$.
We are now in a position to
construct $I^\delta_0,I^\delta_1$ as well as 
the first ``third'' of 
$U^\delta_0,U^\delta_1$
in the form of unions 
$I^{\delta}_0=\bigcup_{i\in\w}I^{\delta,i}_0$,
$I^{\delta}_1=\bigcup_{i\in\w}I^{\delta,i}_1$,
$U^{w,\delta}_0=\bigcup_{i\in\w}U^{w,\delta,i}_0$ and
$U^{w,\delta}_1=\bigcup_{i\in\w}U^{w,\delta,i}_1$ such that
\begin{itemize}
\item[$(7)$] $I^{\delta,i}_s\subset [k_{3i},k_{3i+3} ) $,
where $s\in 2$;
\item[$(8)$] $U^{w,\delta,i}_s\in \big[\bigcup_{k\in I^{\delta,i}_s  }E_k \big]^{<\w}$,
where $s\in 2$;
\item[$(9)$]  $I^{\delta,i}_0\cap I^{\delta,i}_1=\emptyset$ 
(and hence also $U^{w,\delta,i}_0\cap U^{w,\delta,i}_1=\emptyset$)
for all $i\in\w$.
\end{itemize}
\addtocounter{equation}{9}
Set $I^{\delta,i}_0= I^{\delta,i}_1 =U^{w,\delta,i}_0= U^{w,\delta,i}_1=\emptyset$
if $i=0$ or $G_\delta\cap [k_{3i+1},k_{3i+2})=\emptyset$.
Given $i>0$ such that there exists (a necessarily unique)
$n_i\in G_\delta\cap[k_{3i+1},k_{3i+2})$, set $J^\delta_i=\{j\in i\cap J_w: i_*(j)\leq 3i\}$. 
 Note that   $(6)$ 
  yields
 $$ \big|K^{\beta_j,U_j}_{n_i}\cap [k_{3i},k_{3i+3})\big|\geq (3i)^2   $$
 for all $j\in J^\delta_i$.
Since $|J^\delta_i|\leq i$,
we can pick $I^{\delta,i}_0\subset [k_{3i}, k_{3i+3})$
of size $|I^{\delta,i}_0|= i^2$ such that
$|I^{\delta,i}_0\cap K|\geq i $ for all
$K\in \big\{K^{\beta_j, U_j}_{n_i} : j\in J^{\delta}_i \big\}.$
Since all such $K$ have  $\geq 9i^2$ elements in $[k_{3i}, k_{3i+3})$,
we conclude that for  
 $I^{\delta,i}_1:=[k_{3i}, k_{3i+3})\setminus I^{\delta,i}_0$
 we have $|I^{\delta,i}_1\cap K|\geq 8i^2\geq i $ for all $K$ as above. 
 Then the sets
 $$
  U^{w,\delta,i}_s= \bigcup_{j\in J^{\delta}_i}L^{\beta_j}_{n_i} \cap \bigcup_{k\in I^{\delta,i}_s} E_k,    
 $$
 where $s\in 2$, are mutually disjoint. This completes our definition of 
$I^{\delta,i}_0$, $I^{\delta}_1$, $U^{w,\delta}_0$ and
$U^{w,\delta}_1$. Let us note that 
\begin{equation} \label{w0}
U^{w,\delta}_s\subset\bigcup\big\{E_k:k\in I^\delta_s=\bigcup_{i\in\w }I^{\delta,i}_s \big\}    \mbox{ \ and}
\end{equation}
if
{$j\in J^\delta_i$ and}
$k\in K^{\beta_j,U_j}_{n_i}\cap I^{\delta,i}_s$, then
$L^{\beta_j}_{n_i}\cap E_k\cap U_j\neq \emptyset$ and $L^{\beta_j}_{n_i}\cap E_k\subset U^{w,\delta,i}_s$, which yields
$$U^{w,\delta,i}_s\cap L^{\beta_j}_{n_i}\cap E_k\cap U_j\neq \emptyset,$$
where $s\in 2$.
Consequently, 
\begin{eqnarray}\label{w1}
|\{k\in\w: U^{w,\delta}_s\cap L^{\beta_j}_{n_i}\cap E_k\cap U_j\neq 
\emptyset \}|\geq |K^{\beta_j,U_j}_{n_i}\cap I^{\delta,i}_s|\geq i 
\end{eqnarray}
for all $i$ such that $n_i\in G_\delta$,  $j\in J^\delta_i$ and $s\in 2$.


Next, we shall work with pairs $\la \beta_j, U_j\ra$ for $j\in J_h$, i.e.,
that are pairs   $\la \beta_j, U_j\ra$ for which
there exist $n=n(j)\in\w$ and $l=l(j)\in\w$
such that $D^{\beta_j}_n\cap U_j\subset\bigcup_{k\in l}E_{k}$.
Without loss of generality we may assume that
$n(j)\geq n(j')\geq j'$ and  $l(j)\geq l(j')\geq j'$ for all
$j\geq j'$ in $J_h$.

\begin{claim} \label{cl01}
There exists a subset   $M_\delta =\{m_j: j\in J_h\}\in\CG$ of $G_\delta$ such that the following conditions hold for all $j\in J_h$:
\begin{itemize}
\item[$(i)$] $m_{j}>m_{j'}$ for all $j>j'$ in $J_h$;
\item[$(ii)$] 
$c_j:=\min\big(\bigcup_{j'\in(j+1)\cap J_h} D^{\beta_{j'}}_{m_j}  \big)> a_j:=
\max\big(\bigcup_{j'',j^{(3)}\in J_h\cap j} L^{\beta_{j''}}_{m_{j^{(3)}}}   \big)$,  and  \\ hence 
$D^{\beta_{j'}}_{m_j}\cap L^{\beta_{j''}}_{m_{j^{(3)}}}=\emptyset$
for any $j'\in J_h\cap (j+1)$ and $j'',j^{(3)}\in J_h\cap j$;
\item[$(iii)$] $c_j>b_j:=\max\Big(\big (\bigcup_{k\in \w}F^\delta_k\cup U^{w,\delta}_0\cup U^{w,\delta}_1\big)\cap\bigcup_{k\in l(j)}E_k\Big)$,
and \\ hence
$\bigcup_{j'\in J_h\cap (j+1)} (D^{\beta_{j'}}_{m_j}\cap U_{j'})\cap 
\big (\bigcup_{k\in \w}F^\delta_k\cup U^{w,\delta}_0\cup U^{w,\delta}_1\big)=\emptyset$;
\item[$(iv)$] 
$\Big|\big(\max\{a_j,b_j\},c_j\big)\cap E_{p}\cap U_{q}\Big|> j^2$ for all $p,q\in j$;
\item[$(v)$] $M_\delta\setminus m_j\subset G_{\beta_j,U_j} $;
\item[$(vi)$]  $|L^{\beta_{j'}}_{m_j}\cap U_{j'}|>j^2+j$
for all $j'\in (j+1)\cap J_h$.
\end{itemize}
\end{claim}
\begin{proof}
We shall select $m_j$ in a course of the following game of length $\w$,
whose innings are indexed by elements of $J_h$:
in the inning number $j\in J_h$ player I chooses $M_j\in \CG$, and player II replies by
selecting $m_j\in M_j$. Player II wins if $\{m_j:j\in J_h\}\in\CG$. It is known \cite[Theorem~2.6]{Laf96} that $I$ has no winning strategy in this game because 
$\CG$ is selective. 

Next, we shall define a strategy  for player I in the game described above.
Letting $j_{\min}=\min J_h$,
 I starts with 
$M_{j_{\min}}=G_\delta$, and II replies by choosing $m_{j_{\min}}\in M_{j_{\min}}$
such that conditions $(iii)$-$(vi)$ are satisfied for $j=j_{\min}$.
Regarding $(iii)$, $b_{j_{\min}}$ is well-defined because the set 
$$\big (\bigcup_{k\in \w}F^\delta_k\cup U^{w,\delta}_0\cup U^{w,\delta}_1\big)\cap\bigcup_{k\in l(j_{\min})}E_k$$
is finite since by the construction we 
have that $|U^{w,\delta}_s\cap E_k|<\w$ for all $k\in\w$ and $s\in 2$.
(The ``hence'' part of $(iii)$ follows from the definition of $c_j$ and
the inclusion $\bigcup_{j'\in J_h\cap (j+1)} (D^{\beta_{j'}}_{m_j}\cap U_{j'})\subset \bigcup_{k\in l(j)}E_k$.)

Similarly, in the $j$-th inning for $j>j_{\min}$, $j\in J_h$,
player I starts with 
$M_j=G_\delta$, and II replies by choosing $m_j\in M_j$
such that conditions $(i)$-$(vi)$ are satisfied for $j$.
Regarding $(iii)$, $b_j$ is well-defined 
for the same reasons as in case $j=j_{\min}$.

Since the strategy for player I described above cannot be winning, we get the
desired sequence $\la m_j:j\in J_h\ra$.
\end{proof}

Next, we shall construct the second ``third'' of
$U^\delta_0$ and $U^\delta_1$.
Given $j\in J_h$, let us fix $C_{j',j}\in [L^{\beta_{j'}}_{m_j}\cap U_{j'}]^j$
for all $j'\in (j+1)\cap J_h$ and set 
$U^{h,\delta,j}_0=\bigcup_{j'\in (j+1)\cap J_h}C_{j',j}$.
Thus 
\begin{equation} \label{h0}
|U^{h,\delta,j}_0\cap L^{\beta_{j'}}_{m_j}\cap U_{j'}|\geq j
\end{equation}
for all $j'\in (j+1)\cap J_h$. 
Since $|U^{h,\delta,j}_0|\leq j^2$, for
\begin{equation}
    U^{h,\delta,j}_1:=\bigcup_{j'\in (j+1)\cap J_h}(L^{\beta_{j'}}_{m_j}\cap U_{j'})\setminus U^{h,\delta,j}_0
\end{equation}
it follows from $(vi)$ that 
\begin{equation} \label{h1}
|U^{h,\delta,j}_1\cap L^{\beta_{j'}}_{m_j}\cap U_{j'}|\geq j
\end{equation} 
for all $j'\in (j+1)\cap J_h$. 

Let us note that $U^{h,\delta,j}_s\subset\bigcup_{j'\in J_h\cap (j+1)}L^{\beta_{j'}}_{m_j}\subset \bigcup_{j'\in J_h\cap (j+1)}D^{\beta_{j'}}_{m_j}$ for all $s\in 2$,
and hence $(ii)$ guarantees that 
$U^{h,\delta,j}_s\cap U^{h,\delta,j'}_{s'} =\emptyset$
for any $j'\in j\cap J_h$ and $s,s'\in 2$. 
As a result, the sets $U^{h,\delta}_0:=\bigcup_{j\in J_h} U^{h,\delta,j}_0$
and $U^{h,\delta}_1:=\bigcup_{j\in J_h} U^{h,\delta,j}_1$
are also disjoint.  

It follows from $(i)$ and $(iii)$ that 
$$U^{h,\delta,j}_0\cap
\big (\bigcup_{k\in \w}F^\delta_k\cup U^{w,\delta}_0\cup U^{w,\delta}_1\big)=\emptyset =
U^{h,\delta,j}_1\cap \big (\bigcup_{k\in \w}F^\delta_k\cup U^{w,\delta}_0\cup U^{w,\delta}_1\big) $$
for all $j\in J_h$.
Consequently, 
\begin{equation} \label{disj_F's}
U^{h,\delta}_0\cap
\big (\bigcup_{k\in \w}F^\delta_k\cup U^{w,\delta}_0\cup U^{w,\delta}_1\big)=\emptyset =
U^{h,\delta}_1\cap \big (\bigcup_{k\in \w}F^\delta_k\cup U^{w,\delta}_0\cup U^{w,\delta}_1\big). 
\end{equation}

The next ``third'' part of $U^\delta_0,U^\delta_1$, namely
$U^{d,\delta}_0$ and $U^{d,\delta}_1$, will guarantee, in particular,
that $(5)$ holds for $\delta+1$ instead of $\delta$.
For every $j\in J_h$ and $p,q\in j$ fix 
$$s^j_{p,q}\in (\max\{a_j,b_j\},c_j)\cap E_p\cap U_q,$$
 set $U^{d,\delta,j}_0=\{s^j_{p,q}:p,q\in j\}$ and
 $U^{d,\delta,j}_1=(\max\{a_j,b_j\},c_j)\setminus U^{d,\delta}_0$.
 By the definition of $U^{d,\delta,j}_0$, $(iv)$ and $|U^{d,\delta,j}_0|\leq j^2$ we have 
 \begin{equation}\label{all_e_dense}
 U^{d,\delta,j}_0\cap E_p\cap U_q\neq \emptyset \mbox{ \ and\  } U^{d,\delta,j}_1\cap E_p\cap U_q\neq \emptyset
 \end{equation}
 for all $p,q\in j$. Let us also note that 
 \begin{equation} \label{no_inter_with_F's}
 U^{d,\delta,j}_s\cap \big (\bigcup_{k\in \w}F^\delta_k\cup U^{w,\delta}_0\cup U^{w,\delta}_1\big)=\emptyset
 \end{equation}
for any $s\in 2$. Indeed, 
\begin{eqnarray*}
U^{d,\delta,j}_s\cap \big (\bigcup_{k\in \w}F^\delta_k\cup U^{w,\delta}_0\cup U^{w,\delta}_1\big)\subset \\
\subset \big((\w\setminus (b_j+1))\cap\bigcup_{p\in j}E_p\big) \cap
(\bigcup_{k\in \w}F^\delta_k\cup U^{w,\delta}_0\cup U^{w,\delta}_1\big)\subset \\
\subset (\w\setminus (b_j+1))\cap  
\Big(\big (\bigcup_{k\in \w}F^\delta_k\cup U^{w,\delta}_0\cup U^{w,\delta}_1\big)\cap\bigcup_{k\in l(j)}E_k\Big)\subset \\
\subset (\w\setminus (b_j+1))\cap (b_j+1)=\emptyset.
\end{eqnarray*}
Letting $U^{d,\delta}_s=\bigcup_{j\in J_h}U^{d,\delta,j}_s$
for $s\in 2$, we conclude from (\ref{no_inter_with_F's})
that 
\begin{equation} \label{no_inter_with_F's_1}
 U^{d,\delta}_s\cap \big (\bigcup_{k\in \w}F^\delta_k\cup U^{w,\delta}_0\cup U^{w,\delta}_1\big)=\emptyset
 \end{equation}
for any $s\in 2$. 

Conditions $(ii)$ and $(iii)$ yield
$a_j>c_{j'}>a_{j'}$
for all $j'<j$ in $J_h$, and  by the definition $U^{h,\delta,j}_s\subset [c_j,a_{\min(J_h\setminus(j+1))}]$ for all
$j\in J_h$ and $s\in 2$, and
hence 
\begin{equation} \label{no_inter_with_h's}
 U^{d,\delta}_s\cap \big(U^{h,\delta}_0\cup U^{h,\delta}_1\big)=\emptyset
 \end{equation}
for all $s\in 2$. Summarizing the above we get that 
the families
$$\{U^{w,\delta}_0,U^{w,\delta}_1, U^{h,\delta}_0,U^{h,\delta}_1, U^{d,\delta}_0,U^{d,\delta}_1\} \mbox{ \ \ \ and}$$
$$\{\bigcup_{k\in\w}F^\delta_k, U^{h,\delta}_0,U^{h,\delta}_1, U^{d,\delta}_0,U^{d,\delta}_1\} $$
consist of mutually disjoint elements. 


Finally, we set
$$U^{\delta}_0=U^{w,\delta}_0\cup U^{h,\delta}_0\cup U^{d,\delta}_0\cup\big(\bigcup_{k\in I^\delta_0}E_k \: \setminus\: 
(U^{w,\delta}_1\cup U^{h,\delta}_1\cup U^{d,\delta}_1)\big),$$
$$U^{\delta}_1=U^{w,\delta}_1\cup U^{h,\delta}_1\cup U^{d,\delta}_1\cup \big(\bigcup_{k\in I^\delta_1}E_k \: \setminus\: 
(U^{w,\delta}_0\cup U^{h,\delta}_0 \cup U^{d,\delta}_0)\big),$$
and note that 
$U^\delta_s\supset U^{w,\delta}_s\cup U^{h,\delta}_s\cup U^{d,\delta}_s$ for all $s\in 2$
as well as $\w=U^\delta_0\cup U^\delta_1$ because
$\bigcup_{k\in I^\delta_s}E_k$ is easily seen to be a subset of
$U^\delta_0\cup U^\delta_1$ for all $s\in 2$. From (\ref{w0}),  (\ref{disj_F's})
and (\ref{no_inter_with_F's_1}) it follows immediately that 
\begin{equation}\label{non-S-witness}
U^\delta_s\cap \bigcup_{k\in I^\delta_{1-s}}F_k=\emptyset
\end{equation} 
for all $s\in 2$. Thus $I^\delta_0, I^\delta_1,U^\delta_0, U^\delta_1$,
$$\B_\delta=\B_{<\delta}\cup\{U\cap U^\delta_s:U\in\B_{<\delta}, s\in 2\}$$
and the topology $\tau_\delta$ generated by $\B_\delta$
satisfy $(4) $ when $\beta$ is replaced with $\delta$, the density 
of $U^\delta_0, U^\delta_1$ in $\la\w,\tau_{<\delta}\ra$ being a consequence of
(\ref{all_e_dense}).
Conditions $(1)$ and $(2)$ are also clearly satisfied, while 
$(5)$ for $\delta+1$ 
(note that $\tau_{<\delta+1}=\tau_\delta$)  immediately follows from (\ref{all_e_dense}) since it yields
 $U^\delta_s\cap E_q\cap U_p\neq\emptyset$
for all $p,q\in\w$, i.e., any element of $\B_\delta$ intersects
all $E_q$'s. In order to verify $(3)$ for $\delta+1$
we have to consider 2 cases.

I)\;  $\beta<\delta+1$, $U\in\B_\delta\setminus\B_{<\delta}$.
Then there exists $j\in\w$ and $s\in 2$ such that
$\beta=\beta_j$ and $U=U^\delta_s\cap U_j$. Here two subcases are possible.

a)\;  $j\in J_w$. Then $G_{\beta,U}:=G_\delta\setminus 3i_*(j)$
is as required. Indeed, each $n\in G_{\beta,U}$ is of the form
$n_i$ for some $i\geq i_*(j)$ by the definition of $G_\delta$.
Applying (\ref{w1}) we conclude that  
$$|\{k\in\w:U\cap L^\beta_{n}\cap E_k\neq\emptyset\}|\geq |\{k\in\w: 
U^{w,\delta}_s\cap U_j\cap L^{\beta_j}_{n_i}\cap E_k\neq\emptyset\}|\geq i,$$
and hence 
$$\lim_{n\to\infty,n\in G_{\beta,U}}|\{k\in\w:U\cap L^\beta_{n}\cap E_k\neq\emptyset\}|=\infty.$$

b)\; $j\in J_h$. In this case 
$$U\cap D^\beta_{n(j)}=U^\delta_s\cap U_j\cap D^{\beta_j}_{n(j)}\subset
U_j\cap D^{\beta_j}_{n(j)}\subset\bigcup_{k\in l(j)}E_k,$$
and hence we need to prove only $(3)(b)$ in this case.
We claim that 
$G_{\beta,U}:=\{m_{v}:v\geq j\}$
is as required. Indeed, if
$v\geq j$  then 
$$|U\cap L^\beta_{m_v}|=|U^\delta_s\cap L^{\beta_j}_{m_v}\cap U_j|\geq 
|U^{h,\delta,v}_s\cap  L^{\beta_j}_{m_v}\cap U_j|\geq v$$
by (\ref{h0}) and $(\ref{h1})$, which yields 
$\lim_{n\to\infty,n\in G_{\beta,U}}|U\cap L^\beta_{n}|=\infty.$

II)\;  $\beta=\delta$ and $U\in\B_{<\delta}$. Then $(3)$ is satisfied 
for $G_{\beta,U}=\w$ by $(b_\delta)$ and $(c_\delta)$.

This completes our recursive construction of $\tau_{\w_1}$.
The space $X=\la\w,\tau_{\w_1}\ra$ is not $S$-separable
by $(4)$. To show that $X$ is $mS$-separable let us fix 
a decreasing sequence
$\la D_n:n\in\w\ra$ of dense subsets of $X$ with $\bigcap_{n\in\w}D_n=\emptyset$
and find $\beta$ with $D'^\beta_n=D_n$ for all $n\in\w$.
Since each $D_n$
 is dense in $X$, it is also dense in $\la\w,\tau_{<\beta}\ra$,
 and therefore $D^\beta_n=D_n=D'^\beta_n$ for all $n\in\w$.
 Let $U\in\B_{<\w_1}$ and $\delta<\w_1$ be such that $U\in\B_{<\delta}$.
 Then $(3)(b)$ implies that the sequence
 $\la L^\beta_n:n\in\w\ra$ is witnessing that $X$ is 
 $mS_{\CG}$-separable.
\end{proof}


\section{Spaces of functions and FU spaces}

First we shall show that the $S$-separability is closely related to the weak
form of $M$-separability of finite powers defined below.

\begin{definition}
 A topological space  $X$ is \emph{$pM$-separable\footnote{``p'' comes here from ``powers''.},}  if for every sequence $\la D_k:k\in\w\ra$ of dense subsets of 
 $X$ and $n\in\w$, there exists a sequence $\la F_k:k\in\w\ra$ of finite subsets 
 of $X$ such that $\bigcup_{k\in\w} (F_k^n\cap D_k^n)$ is dense in $X^n$. \hfill $\Box$
\end{definition}

Clearly, if $X^n$ is $M$-separable for all $n\in\w$, then $X$ is $pM$-separable.

\begin{proposition} \label{pM_vs_S}
A space $X$ is $S$-separable iff it is $pM$-separable. 
\end{proposition}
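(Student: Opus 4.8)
The plan is to recast $pM$-separability in purely combinatorial language and then treat the two implications separately, the direction ``$pM$-separable $\Rightarrow$ $S$-separable'' being the one that needs an actual idea. First I would record the trivial identity $F^n\cap D^n=(F\cap D)^n$ for any sets $F,D$ and any $n\in\w$. Consequently, in the definition of $pM$-separability there is no loss in assuming $F_k\in[D_k]^{<\w}$, and then $\bigcup_{k\in\w}(F_k^n\cap D_k^n)=\bigcup_{k\in\w}F_k^n$. Since the boxes $U_0\times\cdots\times U_{n-1}$ with each $U_i$ non-empty open form a base of $X^n$, the set $\bigcup_{k\in\w}F_k^n$ is dense in $X^n$ exactly when for every $n$-tuple $(U_i:i\in n)$ of non-empty open subsets of $X$ there is $k\in\w$ with $U_i\cap F_k\neq\emptyset$ for all $i\in n$. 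Thus $pM$-separability asserts precisely that for every sequence $\la D_k:k\in\w\ra$ of dense sets and every fixed ``arity'' $n$ there is $\la F_k:k\in\w\ra$ with $F_k\in[D_k]^{<\w}$ catching every $n$-tuple of non-empty open sets in this coordinatewise sense; $S$-separability is the same statement but with a single $\la F_n\ra$ working for all arities at once.

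For $(\Rightarrow)$, I would simply feed a sequence $\la D_k:k\in\w\ra$ into $S$-separability to obtain $\la F_k:k\in\w\ra$, and note that, applied to the family $\{U_i:i\in n\}$, this already furnishes, for each fixed $n$, the index $k$ required by the reformulated $pM$-separability. As $n$ is arbitrary, $X$ is $pM$-separable. This direction is immediate.

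For $(\Leftarrow)$, the idea is to split $\w=\bigsqcup_{m\in\w}A_m$ into infinitely many pairwise disjoint infinite blocks and to let the block $A_m$ handle finite families of size $m$. Concretely, for each $m$ I would enumerate $A_m=\la a^m_j:j\in\w\ra$ increasingly and apply $pM$-separability to $\la D_{a^m_j}:j\in\w\ra$ with arity $m$, obtaining $F_n\in[D_n]^{<\w}$ for $n\in A_m$ such that every $m$-tuple of non-empty open sets is met (coordinatewise) by some $F_n$ with $n\in A_m$. Performing this for all $m$ produces one sequence $\la F_n:n\in\w\ra$ with $F_n\in[D_n]^{<\w}$. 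Given an arbitrary finite family $\{U_i:i\in k\}$ of non-empty open sets (which is a $k$-tuple in the paper's notation; the case $k=0$ is vacuous), the block $A_k$ then supplies $n\in A_k$ with $U_i\cap F_n\neq\emptyset$ for all $i\in k$. Hence $\la F_n:n\in\w\ra$ witnesses $S$-separability.

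The main, and essentially only, obstacle is the bookkeeping in $(\Leftarrow)$: because $pM$-separability returns a different witnessing sequence for each arity, one cannot get away with a single power, and distributing the indices of $\w$ over infinitely many infinite blocks while assigning arity $m$ to the block $A_m$ is exactly what resolves this. The remaining ingredients — the identity $F^n\cap D^n=(F\cap D)^n$, the reduction of density in $X^n$ to basic open boxes, and the reindexing of the blocks — are routine.
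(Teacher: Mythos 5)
Your proof is correct and follows essentially the same route as the paper: the forward direction is identical, and the backward direction is the same diagonalization idea, differing only in bookkeeping (you partition $\w$ into infinite blocks $A_m$, one per arity, whereas the paper runs a witnessing sequence $\la F_{n,k}:k\in\w\ra$ for each arity $n$ over all of $\w$ and merges via $F_k=\bigcup_{n\le k}F_{n,k}$). Your variant even sidesteps the small strengthening the paper needs, namely that for each open $W\subset X^n$ the set of witnessing indices can be taken infinite.
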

\begin{proof}
Suppose that $X$ is $S$-separable and fix $n\in\w$ and a sequence
$\la D_k:k\in\w\ra$ of dense subsets of $X$. Let $\la F_k:k\in\w\ra$
be a witness for the $S$-separability of $X$, where $F_k\in [D_k]^{<\w}$ for all $k\in\w$. We claim that $\bigcup_{k\in\w} F_k^n$ is dense in $X^n$.
Indeed, let $\emptyset\neq W\subset X^n$ be open and $\la U_i:i\in n\ra$  be a sequence of length $n$ of open non-empty subsets of $X$ such that $\prod_{i\in n}U_i\subset W$.
The choice of $F_k$'s yields $k\in\w$ such that $F_k\cap U_i\neq\emptyset$
for all $i\in n$. Then $F_k^n\cap\prod_{i\in n}U_i\neq\emptyset, $
and hence also $F_k^n\cap W\neq\emptyset$.

Now assume that $X$ is $pM$-separable and fix a sequence
$\la D_k:k\in\w\ra$ of dense subsets of $X$. It follows that for every 
$n\in\w$ we can find a sequence $\la F_{n,k}:k\in\w\ra$ such that $F_{n,k}\in [D_k]^{<\w}$, and for every open  $W\subset X^n$ the set 
$$\{k\in\w:W\cap F_{n,k}^n\neq\emptyset\}$$
is infinite. We claim that $\la F_k:k\in\w\ra$ such that $F_k=\bigcup_{n\leq k}F_{n,k}$ is witnessing the $S$-separability of $X$. Indeed, let $n\in\w$ and $\la U_i:i\in n\ra$ be a sequence of $n$ non-empty open subsets of $X$.
By the choice of   $\la F_{n,k}:k\in\w\ra$ we can find 
$k\geq n$ such that $F_{n,k}^n\cap\prod_{i
\in n}U_i\neq\emptyset$. Thus, $F_{n,k}\cap U_i\neq\emptyset $ for all $i\in n$.
Since $n\leq k$ we have $F_{n,k}\subset F_k$, and therefore $F_{k}\cap U_i\neq\emptyset $ for all $i\in n$, which completes our proof. 
\end{proof}


\begin{corollary}\label{cor_sfinomom_ufinomon}
If $X^n$ is $M$-separable for every $n\in\w$, then $X$
is $S$-separable.
\end{corollary}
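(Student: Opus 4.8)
The plan is to derive Corollary~\ref{cor_sfinomom_ufinomon} directly from Proposition~\ref{pM_vs_S} together with the observation recorded immediately before that proposition. The statement is exactly the composition of two implications already established in the excerpt, so there is no real obstacle here; the ``proof'' is a one-line deduction.

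First I would recall that just after the definition of $pM$-separability it is noted (with the word ``Clearly'') that if $X^n$ is $M$-separable for every $n\in\w$, then $X$ is $pM$-separable. Concretely, given a sequence $\la D_k:k\in\w\ra$ of dense subsets of $X$ and $n\in\w$, the sets $D_k^n$ are dense in $X^n$; applying $M$-separability of $X^n$ to $\la D_k^n:k\in\w\ra$ yields finite sets $E_k\subseteq D_k^n$ with $\bigcup_k E_k$ dense in $X^n$, and then $F_k:=\{x\in X: x$ is a coordinate of some tuple in $E_k\}\in [D_k]^{<\w}$ satisfies $E_k\subseteq F_k^n\cap D_k^n$, so $\bigcup_k(F_k^n\cap D_k^n)$ is dense in $X^n$. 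Hence $X$ is $pM$-separable.

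Second, I would invoke Proposition~\ref{pM_vs_S}, which asserts that a space $X$ is $S$-separable if and only if it is $pM$-separable. Combining the two gives: $X^n$ $M$-separable for all $n\in\w$ $\Rightarrow$ $X$ is $pM$-separable $\Rightarrow$ $X$ is $S$-separable, which is precisely the assertion of the corollary.

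Since the only ingredients are a remark whose justification was already sketched in the text and Proposition~\ref{pM_vs_S}, the main (and only) thing to be careful about is making the passage from ``$X^n$ is $M$-separable'' to ``$X$ is $pM$-separable'' fully explicit, i.e.\ recording the projection argument above rather than just citing ``Clearly''. After that the corollary is immediate, so I would write the proof in a single short paragraph.

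\begin{proof}
As observed right after the definition of $pM$-separability, the hypothesis that $X^n$ is $M$-separable for all $n\in\w$ implies that $X$ is $pM$-separable: given a sequence $\la D_k:k\in\w\ra$ of dense subsets of $X$ and $n\in\w$, the sets $D_k^n$ are dense in $X^n$, so $M$-separability of $X^n$ provides $E_k\in [D_k^n]^{<\w}$ with $\bigcup_{k\in\w}E_k$ dense in $X^n$; setting $F_k\in [D_k]^{<\w}$ to be the (finite) set of coordinates of the tuples in $E_k$ we get $E_k\subseteq F_k^n\cap D_k^n$, hence $\bigcup_{k\in\w}(F_k^n\cap D_k^n)$ is dense in $X^n$. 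Thus $X$ is $pM$-separable, and by Proposition~\ref{pM_vs_S} it is $S$-separable.
\end{proof}
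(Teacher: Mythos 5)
Your proof is correct and is exactly the deduction the paper intends: the corollary is stated without proof precisely because it is the composition of the remark that $M$-separability of all finite powers gives $pM$-separability with Proposition~\ref{pM_vs_S}. Your explicit projection argument (taking $F_k$ to be the coordinates of the tuples in $E_k$) correctly fills in the ``Clearly'' and nothing more is needed.
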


We are in a position now to present the 
\medskip 

\noindent\textit{Proof of Theorem~\ref{main_cp}.}
As in Theorem~\ref{main_ncf} it suffices to prove the implication $(3)\Rightarrow (1)$. But it is known \cite[Corollary 2.12]{BelBonMatTka08} that the $M$-separability of $C_p(T)$
implies that $C_p(T)^n$ is $M$-separable for all $n\in\w$, so it remains to 
apply Corollary~\ref{cor_sfinomom_ufinomon}.
\hfill $\Box$
\medskip


One of the main steps in the proof of Theorem~\ref{fu_vs_s} presented below, namely Claim~\ref{cl_0001}, is rather standard, e.g., a similar argument appears in the proof of \cite[Proposition~4.1]{GruSak11}. We nonetheless decided 
to include the proof  of Claim~\ref{cl_0001} for the sake of completeness. 
\medskip

\noindent\textit{Proof of Theorem \ref{fu_vs_s}.} \ 
Let $\la D_n:n\in\w\ra$ be a sequence of dense subsets of a FU space $X$.
Let $\{x_k:k\in\w\}$ be an enumeration of $X$ and 
 $K=\{k\in\w:x_k$ is not isolated$\}$. 
If $K=\emptyset$ then $X$ is a countable discrete space and such spaces
are clearly $S$-separable, even $H$-separable.
So let us assume that $K\neq\emptyset$.
Let us note that $X\times \w$ is also a countable FU space, where 
$\w$ is equipped with the discrete topology. Also, if
$X\times \w$ is $S$-separable then so is $X$ because the combinatorial separability properties we consider are easily seen to be preserved by 
open subspaces. Thus, replacing $X$ with $X\times\w$, if necessary,
we may (and will) assume that $K$ is infinite.
 
 \begin{claim} \label{cl_0001}
For every $N\in [\w]^\w$ and $k\in K$ there exists
a strictly increasing sequence $\la n_i:i\in\w\ra\in N^\w$
and an injective sequence
$\la y_i:i\in\w \ra$ convergent to $x_k$
such that $y_i \in D_{n_i}$ 
 for every $i\in\w$.
 \end{claim}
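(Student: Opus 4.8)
The plan is to build the convergent sequence recursively, using the Fréchet–Urysohn property at each step to step through infinitely many of the dense sets $D_n$ with $n\in N$. First I would set up the recursion: having chosen $n_0<\dots<n_{i-1}$ in $N$ and $y_0,\dots,y_{i-1}$ with $y_j\in D_{n_j}$, I want to pick $n_i\in N$ larger than $n_{i-1}$ and $y_i\in D_{n_i}$, $y_i\notin\{y_0,\dots,y_{i-1}\}$, with the $y_i$ eventually landing in any prescribed neighbourhood of $x_k$. The naive attempt is to apply FU directly to some $A\subseteq X$ with $x_k\in\overline A\setminus A$, but the subtlety is that we must simultaneously control which $D_n$ the points come from and make the index sequence live in $N$ and be strictly increasing.

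The key idea is to form a single "diagonal" set. Fix $k\in K$. For each $n\in N$, since $D_n$ is dense and $x_k$ is not isolated, $x_k\in\overline{D_n\setminus\{x_k\}}$; so if $x_k$ itself were never needed we could take a sequence in $D_n\setminus\{x_k\}$ converging to $x_k$ — but that only uses one $D_n$. Instead I would consider $A=\{(y,n): n\in N,\ y\in D_n\setminus\{x_k\}\}\subseteq X\times\w$ and the point $(x_k,?)$ — but $\w$ is discrete, so convergence in $X\times\w$ forces the second coordinate eventually constant, which is exactly the wrong thing. The correct move is to stay inside $X$: put $A=\bigcup_{n\in N}(D_n\setminus\{x_k\})$, which is dense, so $x_k\in\overline A$; if $x_k\notin A$, FU gives a sequence in $A$ converging to $x_k$, and each term lies in some $D_n$ with $n\in N$, but the indices need not be distinct or increasing. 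To fix that, I would observe that for each neighbourhood $U$ of $x_k$ and each $m\in\w$, the set $\bigcup_{n\in N, n\ge m}(D_n\setminus\{x_k\})$ is still dense (removing finitely many $D_n$'s from the union changes nothing, since each remaining $D_n$ is already dense), hence $x_k$ is in its closure; iterating FU on the sets $A_m:=\bigcup_{n\in N,n\ge m}(D_n\setminus\{x_k\})$ lets me, at stage $i$, obtain a sequence in $A_{m_i}$ (with $m_i=n_{i-1}+1$) converging to $x_k$ and extract from it a single point $y_i$ belonging to some $D_{n_i}$ with $n_i\ge m_i>n_{i-1}$, and close enough to $x_k$ to lie in the $i$-th basic neighbourhood of $x_k$ and be distinct from the finitely many points chosen so far (possible since the sequence converges to $x_k\notin\{y_0,\dots,y_{i-1}\}$, so all but finitely many of its terms avoid that finite set and lie in any given neighbourhood).

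The remaining case is $x_k\in A$, i.e. $x_k\in D_n$ for some $n\in N$; but this is harmless: I can simply replace each $D_n$ by $D_n\setminus\{x_k\}$ throughout, which is still dense because $x_k$ is not isolated, and run the argument above verbatim, noting that the produced $y_i\ne x_k$ automatically lie in the original $D_{n_i}$. So in all cases the recursion yields a strictly increasing $\la n_i:i\in\w\ra\in N^\w$ and an injective $\la y_i:i\in\w\ra$ with $y_i\in D_{n_i}$; injectivity and convergence to $x_k$ come from having, at stage $i$, chosen $y_i$ inside the $i$-th member of a fixed countable neighbourhood base at $x_k$ (such a base exists since $X$ is countable, hence first countable at non-isolated points — or one uses that a convergent sequence into $\{y_0,\dots,y_{i-1}\}^c$ supplies terms in arbitrarily small neighbourhoods) and distinct from all previously chosen points. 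The main obstacle is precisely the bookkeeping that keeps the indices $n_i$ inside $N$ and strictly increasing while still invoking the FU property, and the clean way around it is the observation that each $A_m$ remains dense, so Fréchet–Urysohn applies at every stage.
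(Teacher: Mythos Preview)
Your recursive approach has a genuine gap at the convergence step. You write that convergence of $\langle y_i\rangle$ to $x_k$ is ensured by choosing $y_i$ in ``the $i$-th member of a fixed countable neighbourhood base at $x_k$'', and you justify the existence of such a base by saying ``$X$ is countable, hence first countable at non-isolated points''. This implication is false: the sequential fan $S_\omega$ (one limit point $*$ together with countably many disjoint sequences converging to it) is a countable Fr\'echet--Urysohn space that is \emph{not} first countable at $*$. Your fallback phrase ``a convergent sequence \dots\ supplies terms in arbitrarily small neighbourhoods'' has no content without a countable base against which to measure ``small''. In fact the sequential fan shows the whole scheme fails: if $\sigma_i=\{i\}\times\omega$, then each $\sigma_i$ converges to $*$, yet \emph{no} choice of one point $y_i\in\sigma_i$ produces a sequence $\langle y_i\rangle$ converging to $*$. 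So picking, at stage $i$, a single point from a sequence converging to $x_k$ (which is all your recursion does) need not produce a convergent sequence.

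The paper sidesteps this by never trying to approach $x_k$ stage by stage. It first uses FU to get a sequence $\langle z_n:n\in N\rangle\subset X\setminus\{x_k\}$ converging to $x_k$, and then, for each $n\in N$, a sequence $\langle y^n_j:j\in\omega\rangle\subset D_n\setminus\{x_k\}$ converging to $z_n$ (not to $x_k$). The set $\{y^n_j:n\in N,\ j\in\omega\}$ has $x_k$ in its closure, so one final application of FU yields an injective sequence $\langle y_i\rangle$ in this set converging to $x_k$. The key point is that, for each fixed $n$, the sequence $\langle y^n_j\rangle_j$ converges to $z_n\neq x_k$, so $\langle y_i\rangle$ can meet $\{y^n_j:j\in\omega\}$ only finitely often; hence infinitely many distinct indices $n$ occur, and passing to a subsequence gives strictly increasing $n_i\in N$ with $y_i\in D_{n_i}$. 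This ``two-level'' trick is exactly what is needed to force the indices to be unbounded while applying FU only finitely many times in a way that actually guarantees convergence.
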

 \begin{proof}
Given $k\in K$,
let $\la z_n:n\in N\ra \in (X\setminus\{x_k\})^\w$ be a sequence 
convergent to $x_k$.
For every  $n\in N$  let $\la y^{n}_{j}:j\in\w\ra$ be a sequence 
of elements of $D_n\setminus\{x_k\} $ converging to $z_n$.
Thus  
$$ x_k\in\overline{\{y^{n}_j:n\in N,j\in\w\}}\setminus \{y^{n}_j:n\in N,j\in\w\},$$ 
and hence there exists an injective sequence $\la y_i:i\in\w\ra$ of elements
of $\{y^{n}_j:n\in N,j\in\w\}$ convergent to $x_k$. 
Clearly, {since $X$ is Hausdorff,}
$$|\{y_i:i\in\w\}\cap\{y^n_j:j\in\w\}|<\w$$
for all $n\in N$ since $\la y_i:i\in\w\ra$ and $\la y^n_j:j\in\w\ra$
have different limit points. Thus, passing to a subsequence
of $\la y_i:i\in\w\ra$, if necessary, we may assume that
$y_i\in D_{n_i}$ for all $i\in\w$, where  $\la n_i:i\in\w\ra\in N^\w$
is strictly increasing.
 \end{proof}
 Using Claim~\ref{cl_0001} by induction  over $k\in K$ we can construct a decreasing sequence
 $\la N_k :k\in K\ra$ of infinite subsets of $\w$, and for every
 $k\in K$ 
a strictly increasing sequence $\la n^k_i:i\in\w\ra\in N_k^\w$,
and an injective sequence
$\la y^k_i:i\in\w \ra$ convergent to $x_k$
such that $y^k_i \in D_{n^k_i}$ 
 for every $i\in\w$, making sure that 
 $N_{\mathrm{next}_K(k)} =\{n^k_i:i\in\w\}$
 for every $k\in K$. (Here $\mathrm{next}_K(k)=\min(K\setminus (k+1))$
 for every $k\in K$.)
 
 Let $\la m_k:k\in K\ra$ be a strictly increasing sequence such that 
 $m_k\in N_k$.
 For every $k\in K\setminus\{\min(K)\}$ and $l\in K\cap k$ find 
 $i(k,l)$ such that $m_k=n^l_{i(k,l)}$
 and set 
 $$F^0_{m_k} = \{y^l_{i(k,l)}:l\in K\cap k\}\in [D_{m_k}]^{<\w}. $$
 If $n\not\in\{m_k:k\in K\}$, we set $F^0_n=\emptyset$. 
 Let also $F^1_n=\{x_l:l\in\w\setminus K,l\leq n\}$
 and note that $F^1_n\subset D_n $
 because each dense set must contain all isolated points.
 Finally, we claim that the sets $F_n:=F^0_n\cup F^1_n$, $n\in\w$,
 are as required. Indeed,
 if $x_l$ is  isolated, then
 $x_l\in F^1_n\subset F_n$ for all but finitely many
 $n$. In particular,  $\{x_l\}\cap F_{m_k}\neq\emptyset$
 for all but finitely many $k\in K$.

Let now $O$ be an open neighborhood of some $x_l,$ $l\in K$.
Since $\la y^l_i:i\in\w \ra$ converges to $x_l$,
we have that 
$y^l_i\in O$ for all but finitely many $i\in\w$. In particular,
$O$ contains $y^l_{i(k,l)}$ for all but finitely many 
$k\in K\setminus (l+1)$, and 
$y^l_{i(k,l)}\in F^0_{m_k}\subset F_{m_k}$ for all $k\in K\setminus(l+1)$.
Thus, $O\cap F_{m_k}\neq\emptyset$ for all but finitely many $k\in K$.
All in all, for every open non-empty
$U\subset X$, $U\cap F_{m_{k}}\neq\emptyset$ for all but finitely many
$k\in K$, which clearly yields the $S$-separability of $X$.
\hfill $\Box$
\medskip

\noindent \textbf{Acknowledgments.} The second author would like to thank the “National Group for Algebraic and Geometric Structures, and their Applications” (GNSAGA– INdAM) for generous support for this research. A part of the results presented in this paper was obtained in July 2024 when the third named author visited the first named one at the University of S$\tilde{\mathrm{a}}$o Paulo in S$\tilde{\mathrm{a}}$o Carlos. This visit was supported by FAPESP grant 2023/00595-6. The third named author would like to thank the first one and his institute members for their great hospitality.\\

The authors express their gratitude to the referee for the thorough reading of the manuscript and the suggested corrections and improvements.


\begin{thebibliography}{ChGP??}

\bibitem{BarMaeZdo23}
Bardyla, S.; Maesano, F.; Zdomskyy, L., {\it Selective separability properties of Fr\'{e}chet-Urysohn spaces and their products,} Fund. Math. \textbf{263} (2023), 271--299. 


\bibitem{BelBonMatTka08}
 Bella, A.; Bonanzinga, M.; Matveev, M.; Tkachuk, V.,
{\it Selective separability: general facts and behavior in countable spaces,}
Topology Proc. \textbf{32} (2008), 15--30.


\bibitem{BelBonMat09}
 Bella, A.;  Bonanzinga, M.;  Matveev, M., {\it Variations of selective 
 separability,} Topology Appl. \textbf{156} (2009),  1241--1252.

\bibitem{BlaWei78}
Blass, A.;  Weiss, G., {\it A characterization and sum decomposition for operator ideals,} Trans. Amer. Math. Soc. {\bf 246} (1978), 407--417.

\bibitem{Bla86}
Blass, A., {\it Near coherence of filters. I. Cofinal equivalence of models of arithmetic,} Notre Dame J. Formal Logic {\bf 27} (1986),  579--591.

\bibitem{BlaLaf89}
Blass, A.;  Laflamme, C., {\it Consistency results about filters and the number of inequivalent growth types,} J. Symbolic Logic {\bf 54} (1989),  50--56.

\bibitem{Bla10}
Blass, A.,
\emph{Combinatorial cardinal characteristics of the continuum},
in: \textit{Handbook of Set Theory} (M.\ Foreman, A.\ Kanamori, and M.\ Magidor, eds.),
Springer, 2010, pp. 395--491.

\bibitem{BlaShe87}
Blass, A.; Shelah, S., {\it There may be simple $P_{\aleph_1}$- and $P_{\aleph_2}$-points and the Rudin-Keisler ordering may be downward directed,} Ann. Pure Appl. Logic \textbf{33} (1987), 213--243.

\bibitem{BlaShe89}
Blass, A.; Shelah, S., {\it Near coherence of filters. III. A simplified consistency
proof,} Notre Dame J. Formal Logic \textbf{30} (1989), 530--538.

\bibitem{ChaPol02}
Haber, J., Pol, R.,  {\it A remark on Fremlin–Miller theorem concerning the Menger property and
Michael concentrated sets,} preprint (2002).
Arxiv: 2305.10797



\bibitem{Eng} Engelking, R., {\it General topology.}
  Second edition.
 Sigma Series in Pure Mathematics,
 6. Heldermann Verlag, Berlin, 1989.

\bibitem{GruSak11}
Gruenhage, G.;  Sakai, M., {\it Selective separability and its variations,}
Topology Appl. \textbf{158} (2011),  1352--1359.


\bibitem{Hur27} Hurewicz, W.,
{\it \"Uber Folgen stetiger Funktionen,} Fund.  Math. \textbf{9}  (1927), 193--204.

\bibitem{coc2}  Just, W.; Miller, A.W.; Scheepers, M.; Szeptycki, P.J.,
{\it The combinatorics of open covers. II,}  Topology Appl.
\textbf{73}  (1996),   241--266.

\bibitem{Laf96}
Laflamme, C., 
{\it Filter games and combinatorial properties of strategies,} 
in: \emph{Set Theory} 
(T. Bartoszy\'nski and M. Scheepers, eds.),
Contemp. Math., 192,
American Mathematical Society, Providence, RI, 1996, 51--67.




\bibitem{Men24}
Menger, K., {\it Einige \"{U}berdeckungss\"{a}tze der Punktmengenlehre,} Sitzungsberichte. Abt. 2a, Mathematik, Astronomie, Physik, Meteorologie und
Mechanik (Wiener Akademie) \textbf{133} (1924), 421--444.

\bibitem{MilShe11}
Mildenberger, H.; Shelah, S., {\it The near coherence of filters principle does not imply the filter dichotomy principle,} Trans. Amer. Math. Soc. {\bf 361} (2009),  2305--2317.

\bibitem{Mil80}
Miller, A., {\it There are no $Q$-points in Laver's model for the Borel conjecture,} Proc. Amer. Math. Soc. {\bf 78} (1980),  103--106.


\bibitem{coc1}
  Scheepers, M., {\it Combinatorics of open covers. I. Ramsey theory,} Topology Appl. \textbf{69} (1996), 31--62.

\bibitem{Sch99}
 Scheepers, M., {\it Combinatorics of open covers. VI. Selectors for sequences of dense sets,} Quaest. Math. \textbf{22} (1999), 109--130.

\bibitem{SzeTsaZdo21}
 Szewczak, P.;  Tsaban, B.;  Zdomskyy, L.,
 {\it Finite powers and products of Menger sets,} Fund. Math. {\bf 253} (2021),  257--275.

\bibitem{TsaZdo08}
 Tsaban, B.;  Zdomskyy, L., {\it Scales, fields, and a problem of Hurewicz,} J. Eur. Math. Soc. (JEMS) {\bf 10} (2008),  837--866.

\bibitem{Zdo05} Zdomskyy, L.,  {\it A semifilter approach to selection principles,}
Comment. Math.  Univ.  Carolin. \textbf{46} (2005), 525--539.

\bibitem{Zdo06}
Zdomskyy, L., {\it $o$-boundedness of free objects over a Tychonoff space,} Mat. Stud. {\bf 25} (2006),  10--28.

 


\end{thebibliography}
\end{document}